\newcommand{\n}{\mathbbm{n}}
\newcommand{\optionalitemlabel}[2]{%
  \phantomsection
  #1\protected@edef\@currentlabel{#1}\label{#2}%
}
\newtheorem*{theorem*}{Theorem}
\newtheorem{theorem}{Theorem}[section]
\newtheorem{proposition}[theorem]{Proposition}
\newtheorem{remark}[theorem]{Remark}
\newtheorem{example}[theorem]{Example}
\newtheorem{observation}[theorem]{Observation}
\title{Unimodal maps perturbed by heteroscedastic noise: An application to a financial systems}
\author{Fabrizio~Lillo\thanks{Dipartimento di Matematica, Universit\'a di Bologna and Scuola Normale Superiore, Pisa, Italy. Email address: fabrizio.lillo@unibo.it.} 
\and Giulia~Livieri\thanks{The London School of Economics and Political Science, London, United Kingdom, Email address: g.livieri@ac.lse.uk.} 
\and Stefano~Marmi\thanks{Scuola Normale Superiore, Pisa, Italy. Email address: stefano.marmi@sns.it.} 
\and Anton~Solomko\thanks{Envelop Risk, Bristol, UK. Email address: solomko.anton@gmail.com.}
\and Sandro~Vaienti\thanks{Aix Marseille Universit\'e, Universit\'e de Toulon, CNRS, CPT, 13009 Marseille, France. Email address: vaienti@cpt.univ-mrs.fr.}}
\date{\today}
\begin{document}
\maketitle

\begin{abstract}
We investigate and prove the mathematical properties of a general class
of one-dimensional unimodal smooth maps perturbed with a heteroscedastic noise. Specifically, we investigate the stability of the associated Markov chain, show the weak convergence of the unique stationary measure to the invariant measure of the map, and show that the average Lyapunov exponent depends continuously on the Markov chain parameters. Representing the Markov chain in terms of random transformation enables us to state and prove the Central Limit Theorem, the large deviation principle, and the Berry-Ess\'een inequality. We perform a multifractal analysis for the invariant and the stationary measures, and we prove Gumbel’s law for the Markov chain with an extreme index equal to 1. In addition, we present an example linked to the financial concept of systemic risk and leverage cycle, and we use the model to investigate the finite sample properties of our asymptotic results   
\end{abstract}

{\small
{\bf Keywords:} random dynamical systems, unimodal maps, Lyapunov exponents, leverage cycles, systemic risk.

{\bf 2020 Mathematics Subject Classification:} primary 91G80, 34F05; secondary 37H15, 62M45.
}

\newpage
\tableofcontents
\section{Introduction}
\indent    
In this paper, we investigate and prove some mathematical properties -- detailed below -- for the following discrete time dynamical system:  
\begin{equation}\label{eq:differenceequation}
    \phi_t = T(\phi_{t-1}) + \sigma_{\n}(\phi_{t-1})Y_{t-1}.
\end{equation}
Here $\phi_t$, $t \in \mathbb{N}_{\geq 1}$, is a sequence of real numbers in a bounded interval of $\mathbb{R}$, $T$ is a deterministic map on $[0,1]$ perturbed with the additive and heteroscedastic\footnote{A sequence of random variables is heteroscedastic if the variance is not constant.} noise $\sigma_{\n}(\phi_{t-1})Y_{t-1}$, being $\n \in \mathbb{N}_{\geq 1}$ a parameter that modulates the intensity of the noise; $\n$ is such that one retrieves the deterministic dynamic as $\n \rightarrow \infty$. Finally, $Y_t$, $t \in \mathbb{N}_{\geq 1}$, is a sequence of independent and identically distributed (\textit{i.i.d.}) real-valued random variables defined on some filtered probability space $(\Omega, \mathcal{F}, (\mathcal{F}_{t})_{t\geq 0}, \mathbb{P})$ satisfying to the usual conditions. The precise assumptions on $T$, $\sigma_{\n}$, and $Y_t$, $t \in \mathbb{N}_{>0}$, will be given in Section \ref{sec::assumptions}. The peculiarity of the model in Equation \eqref{eq:differenceequation} is that the law of the random perturbation, particularly its variance, depends on the position $\phi_{t-1}$ of the point, and therefore of its iterates by the dynamics. The model in \eqref{eq:differenceequation} can be used to describe situations where a slow deterministic dynamics interacts with a fast random one, and more generally when the two systems interact with a separation of time scale. For this reason, in the present paper, we put it in a very general setting. In Section \ref{sec::example}, we will present an example taken from a specific financial problem whose  dynamics can be brought back to \eqref{eq:differenceequation}.\\ 
 \indent    To study the mathematical properties of \eqref{eq:differenceequation}, we describe the dynamics of $\phi_{t}$, $t\in\mathbb{N}_{\geq 1}$, utilizing a Markov chain parametrized by $\n$; we will study the regime of finite $\n$ and the limit for $\n \rightarrow \infty$. As far as we know, the Markov chains with the kind of heteroscedastic noise we introduce are new (see \cite{gianetto2012estimating} for another type of heteroscedastic nonlinear auto-regressive process applied to financial time series). In particular, in the first part of the paper, we prove the following results. First, we investigate the stability of the Markov chain. It turns out that some specific properties of the stochastic kernel that defines our model do not allow us to apply general results available in the literature, as e.g., \cite{borovkov1998ergodicity} and \cite{meyn1993markov}. For instance, we do not know if our chain is Harris recurrent. We look, instead, at the spectral properties of the Markov operator associated with the chain on suitable Banach spaces and prove the quasi-compactness of such an operator. This result allows us to get finitely many stationary measures with bounded variation densities. The uniqueness of the stationary measure is achieved when the chain perturbs the map $T$, which is either topologically transitive on a compact subset of $[0,1]$ or an attracting periodic orbit. Second, we show the weak convergence of the unique stationary measure to the invariant measure of the map. This step is not trivial because the stochastic kernel becomes singular in the limit of large $\n$. Third, we introduce the average Lyapunov exponent by integrating the logarithm of the derivative of the map $T$ with respect to the stationary measure and show that the average Lyapunov exponent depends continuously on the Markov chain parameters. The previous result hinges on the \textit{explicit} construction of a sequence of random transformations close to $T$, which allows us to replace the deterministic orbit of $T$ with a random orbit given by the concatenation of the maps randomly chosen in the sequence. Notice that this construction is formally possible under general assumptions, but getting ``representations by special classes of transformations" is challenging, as Y. Kifer pointed out in \cite{kifier1986ergodic}. We believe this inference from the Markov chain to the random transformations is interesting and illustrates very well how the Markov chain randomly moves the states of the system. Representing the Markov chain in terms of random transformation enables us to state and prove some important limit theorems, such as the Central Limit Theorem, the large deviation principle, and the Berry-Ess\'een inequality. Fourth, for the class of unimodal maps $T$ of the chaotic type, we perform a multifractal analysis for the invariant and the stationary measures. Finally, we develop an \textit{Extreme Value Theory} (EVT, henceforth) for our Markov chain with finite values for the parameter $\n$. In particular, we prove Gumbel's law for the Markov chain with an extreme
index equal to 1. Notice that an EVT for Markov chains with the spectral techniques we will use is, as far as we know, a new result.\\
\indent     In the second part of the paper, we present an example linked to the financial concept of systemic risk to which our theory applies. In this setting, $\phi_t$ in \eqref{eq:differenceequation} represents the suitably scaled financial leverage of a representative investor (a bank) that invests in a risky asset. At each point in time, the scaling is a linear function of the leverage itself. The bank's risk management consists of two components. First, the bank uses past market data to estimate the future volatility (the risk) of its investment in the risky asset. Second, the bank uses the estimated volatility to set its desired leverage. However, the bank is allowed maximum leverage, which is a function of its perceived risk because of the Value-at-Risk (VaR) capital requirement policy. More specifically, the representative bank updates its expectation of risk at time intervals of unitary length, say $(t, t + 1]$ with $t\in\mathbb{N}_{\geq 1}$, and, accordingly, it makes new decisions about the leverage. Moreover, the model assumes that over the unitary time interval $(t, t + 1]$ the representative bank re-balances its portfolio to target the leverage without changing the risk expectations. The re-balancing takes place in $\n$
sub-intervals within $(t, t + 1]$.  In particular, the
considered model is a discrete-time slow-fast dynamical system; see \cite{dolgopyat2005averaging} and \cite{berglund2006noise}. After showing that the dynamics of the scaled leverage follows -- under suitable approximations -- a deterministic unimodal map on $[0, 1]$ perturbed with additive and heteroscedastic noise of the type of Equation (\ref{eq:differenceequation}), we perform a detailed numerical analysis in support of our theory. The numerical analysis also investigates the finite-size validity of some of our asymptotic results. In addition, we 
provide a financial discussion of the results. Notice that 
the example presented is a non-trivial extension of the models in \cite{corsi2016micro}, \cite{lillo2021analysis} and \cite{mazzarisi2019panic}, where the scaling of the leverage is constant. In particular, in \cite{lillo2021analysis}, the authors were also able to show that the constant-scaled leverage follows a (different) deterministic unimodal map with heteroscedastic noise. Also, they were able to prove the existence of a unique stationary density with bounded variation, the stochastic stability of the process, and the almost certain existence and continuity of the Lyapunov exponent for the stationary measure. In the present paper, we prove and extend the previous results but for a more general class of maps, and, as said, we generalize the model in \cite{lillo2021analysis}.\\
\indent \textit{Organization of the paper.} Section \ref{sec::assumptions} presents and discuss the working assumptions of the dynamics in \eqref{eq:differenceequation}. Section \ref{sec::markovchain} details the construction of the Markov chain. Section \ref{sec::randomtrasformation} represents our model regarding random transformations. In Section \ref{sec::mathematical_properties}, we investigate the mathematical properties of our model. An EVT theory for the Markov chain in \ref{sec::markovchain} is provided in Section \ref{sec::extremevalues}. In Section \ref{sec::example}, we present the financial model of a representative bank managing its leverage. We show that the model leads to a slow-fast deterministic random dynamical system which can be recast into a unimodal deterministic map with heteroscedastic noise of the type of Equation \eqref{eq:differenceequation}. We present and discuss some numerical investigation of this system in connection with our theory. 
\section{Assumptions}\label{sec::assumptions}
In this section, we define and discuss assumptions on $T$, $\sigma_{\n}$ and $Y_t$, $t \in \mathbb{N}_{\geq 1}$, as in Equation \eqref{eq:differenceequation}.  

\begin{enumerate}[label=(A\arabic*)]
\item\label{itm:A1} The map $T$ satisfies the following assumptions:
\begin{enumerate}
    \item $T$ is a continuous map of the unit interval $I\overset{\text{def}}{=}[0,1]$ with a unique maximum at the point $\mathfrak{c}$ such that $\Delta \overset{\text{def}}{=} T(\mathfrak{c}) < 1$.
    \item  There exists a closed interval $[d_1, d_2] \subset I$ which is forward invariant for the map and upon which $T$ and all its power $T^{t},\,t\in\mathbb{N}_{\geq 1}$ are topologically transitive\footnote{A map $T$ on a topological space $X$ is called \textit{topologically transitive} if for all nonempty open sets $U, V \subset X$ there exists $t$ such that $T^{-t} \cap V \neq \emptyset$. Notice that the topological transitivity of $T^{t}$, $t\in\mathbb{N}_{\geq 1}$ will be substantially used in Subsection \ref{subsec::lyapunovexponent}, \ref{subsec::limittheorems}, and \ref{subsec::multifractal}.}
    \item   $T$ preserves a unique Borel probability measure $\eta$, which is absolute continuous with respect to the Lebesgue measure. 
\end{enumerate} 
\end{enumerate}
Notice that Assumption \ref{itm:A1}--(b) is necessary in order to prove the mathematical properties in Section \ref{sec::mathematical_properties}; in general, one could ask for several transitive component but this would be an additional technicality that would not add to the present work’s conceptual advancements. Assumption \ref{itm:A1}-(c) is used only in the proof of the stochastic stability; see Subsection \ref{subsec::stationary}. 
We give now the following important
\begin{example}\label{ex::unimodalmaps}
An important class of maps susceptible to verify \ref{itm:A1} is given by the class of unimodal maps $T$ (\cite{thunberg2001periodicity} and \cite{melo2012onedimensional}) with negative Schwarzian derivative\footnote{The Schwarzian derivative $S(T)$ of the map $T$ is defined as $S(T):=\frac{T^{'''}}{T^{'}}-\frac{3}{2}\left(\frac{T^{'''}}{T^{'}}\right)^{2}$.}; notice that in this case, one has to require that the maps are at least $C^{3}$ on the interval $I$. Moreover, if $T$ verifies  $T(\Delta) < \mathfrak{c} < \Delta$, then the interval $[T(\Delta), \Delta]$, called \textit{dynamical core}, is mapped onto itself and absorbs all initial conditions; in particular $[d_1,d_2]$ in \ref{itm:A1}-(b) coincides with the dynamical core. The latter could exhibit motions other than simply attracting fixed points or 2-cycles. In the general class of unimodal maps $T$ with negative Schwarzian derivative, one can distinguish two types:
\begin{itemize}
    \item[(i)] $T$ is \textit{periodic} if there is a globally attracting fixed point or a globally attracting cycle.
    \item[(ii)] $T$ is \textit{chaotic} if \ref{itm:A1}-(b) and \ref{itm:A1}-(c) hold.
\end{itemize}
Perturbations of unimodal maps with uniform additive noise were studied
in \cite{baladi2002almost} and \cite{baladi1996strong}. As we already mentioned in the Introduction, \cite{lillo2021analysis}, instead, studies the perturbations of unimodal maps with heteroscedastic noise. 
\end{example}
\indent Before presenting the assumptions on $\sigma_{\n}$, we need to introduce the following quantities to which in the following we will refer:
\begin{equation}\label{eq:gap}
    \Gamma \overset{\text{def}}{=} 1 - \Delta,
\end{equation}
i.e., the gap between $T(\mathfrak{c})$ and 1. 
A positive constant $a$ satisfying one of the following two bounds:
\begin{equation}\label{eq:boundona1}
    a \leq \frac{1}{\sigma_{\text{max}}}\frac{\Gamma}{2},
\end{equation}
or 
\begin{equation}\label{eq:boundona2}
    a \leq \frac{1}{\sigma_{\text{max}}}\min\left\{\frac{\Gamma}{2}, \frac{q}{2}, \frac{1}{2}T\left(1-\frac{\Gamma}{2}\right)\right\},
\end{equation}
where $\sigma_{\max} \overset{\text{def}}{=} \max_{x \in I}\sigma_{\n}(x)$, and the positive constant $q$ is the eventual intercept of the map $T$ at zero (see Assumption \ref{itm:B12}).

\begin{enumerate}[label=(B\arabic*)]   
    \item\label{itm:B1} The function $\sigma_{\n}$ is a non-negative differentiable function for $x \in (0,1)$ such that $\forall x$ $\sigma_{\n}(x)\rightarrow 0$ as $\n \rightarrow \infty$. 
\end{enumerate}
We distinguish the following two sub-cases of  \ref{itm:B1}:
\begin{enumerate}[label=(B1.\arabic*)]
    \item\label{itm:B11}  $T(0)=0$, and so $\sigma_{\n}(0)=0$. In this case, we assume that for any fixed $\n \in \mathbb{N}_{>0}$ there exists $\varepsilon_{\n} \in \mathbb{R}$ such that:
    \begin{itemize}
        \item $T(x) - a \sigma_{\n}(x) > 0$ for $x \in (0, 1-\Gamma/2]$.
        \item $T(x) - a \sigma_{\n}(x) > x$ for $x \in (0, \varepsilon_{\n})$ (in particular, $T^{'}(0)>0$).
        \item $T(x) - a \sigma_{\n}(x) > \varepsilon_{\n}$ for $x \in (\varepsilon_{\n}, 1-\Gamma/2)$.
    \end{itemize}
    $\Gamma$ is defined in \eqref{eq:gap}, and $a$ satisfies \eqref{eq:boundona1}.
    \item\label{itm:B12}   $T(0)=q>0$. In this case, the positive multiplicative constant in \ref{itm:B11} satisfies \eqref{eq:boundona2}. 
\end{enumerate}
Assumptions \ref{itm:B11}-\ref{itm:B12} will allow us to define the transition probabilities for constructing our Markov chain. Indeed, the probability density $p_{\n}(x, \cdot)$ defining those probabilities will be supported on $[s_{a,-}(x), s_{a,+}(x)]$ with $s_{a,\pm}(x) = T(x)\pm a\sigma_{\n}(x)$; see Section \ref{sec::markovchain}. Moreover, \ref{itm:B11} requires $T$ to be $C^{1}$.\\ 
\indent Finally, we have that 
\begin{enumerate}[label=(C\arabic*)]
\item\label{itm:C1} $Y_t$, $t \in \mathbb{N}_{\geq 1}$ is a sequence of \textit{i.i.d} real-valued random variables defined on some filtered probability space $(\Omega, \mathcal{F}, (\mathcal{F}_{t})_{t\geq 0}, \mathbb{P})$ satisfying to the usual conditions. Their distribution function $g_{a}$, depending on the parameter $a$ in \ref{itm:B1}, is such that $\forall \omega \in \Omega$ and $x \in \tilde{I}$, with $\tilde{I} \supset I$, we have $T(x)+\sigma_{\n}(x)Y_{1} \in \tilde{I}$. The interval $\tilde{I}$ is slightly larger than $I$ and will be precisely determined later. Accordingly, the map $T$ will be extended on $\tilde{I}$. The distribution function $g_a$ has the following form:
\begin{equation}\label{eq:gaussiankernel}
    g_{a}(y)\overset{\text{def}}{=} c_{a} \chi_{a}(y)e^{-\frac{y^2}{2}},\quad y \in \mathbb{R},
\end{equation}
where $\chi$ is a $C^{\infty}$ bump function on $[-a,a]$ and
\begin{equation*}
    c_a = \left(\int_{\mathbb{R}} \chi_{a}(y)e^{-\frac{\varepsilon^2}{2}}\,dy\right)^{-1}.
\end{equation*}
\end{enumerate}

\noindent Assumption \ref{itm:C1} has two main objectives. On the one hand, the perturbation should not be too strong so that $T$ admits an extension to some compact interval $\tilde{I} \supset I$. On the other hand, the stochastic kernel associated with our Markov chain must be uniformly bounded on some interval to prove the Markov operator's quasi-compactness. As the introduction states, the Markov operator's quasi-compactness will provide stationary measures for the chain.\\

\noindent From now on, we will denote by 
\begin{equation}\label{eq::randomsystem}
    \mathcal{S}_{\n} \overset{\text{def}}{=} (T, \sigma_{\n}, Y)
\end{equation}
the triple composed by a map $T$ satisfying \eqref{itm:A1}, perturbed with an additive heteroscedastic noise in which the variance-like function $\sigma_{\n}$ and the noise verifies \eqref{itm:B1} and \eqref{itm:C1}, respectively.\\ \\

\indent Under \ref{itm:A1}, \ref{itm:B1}, and \ref{itm:C1}, we define the following stochastic process:
\begin{equation}\label{eq::stochasticprocess}
    F_t^{(\n)}(x) = T(x) + \sigma_{\n}(x)Y_{t},\,\,t\in\mathbb{N}_{\geq 1},\,x \in \tilde{I}.
\end{equation}
In addition, the random orbit associated with our initial difference equation is given by:
\begin{equation}\label{eq::randomorbit}
    F_{Y}^{t, (\n)}(x) = F_t^{(\n)} \circ F_{t-1}^{(\n)} \circ \ldots \circ F_{1}^{(\n)}(x),\,\,x \in \tilde{I}.
\end{equation}

Before proceeding, the following observation is in order. In Sections \ref{sec::markovchain}, we will see that several results valid under Assumption \ref{itm:A1} could also be extended for the class of maps in Example \ref{ex::unimodalmaps} that are periodic. This will be in particular relevant for the leading example in Section \ref{sec::example}. The validity of \ref{itm:A1} is much easier to verify for uniformly or even piecewise continuous maps. In principle, one could also consider multimodal maps as having several critical points. However, in the latter case, one has to handle the construction of the stationary measure as outlined in Section \ref{sec::mathematical_properties}. Notice that such a construction is also based on Assumption \ref{itm:B1} and \ref{itm:C1}.

\section{Markov chain}\label{sec::markovchain}
In this section, we define a Markov chain that describes our model. We obtain it as a deterministic map $T$ satisfying Assumption \ref{itm:A1} perturbed with an additive noise as in Assumptions \ref{itm:B1}-\ref{itm:C1}. In particular, for fixed $T$ we parametrize the chain by the \textit{intensity of the noise} $\n$, consequently indexing with $\n$ the chain $(X_t^{(\n)})$, the transition probabilities $P_{x}^{(\n)}$, and the stochastic kernel $p_{\n}(x, y)$. According to the theory of random transformations, a Markov chain can be constructed as follows; see, e.g., \cite{kifier1986ergodic}. Take an initial point $x \in I$ \footnote{One could also consider the initial point as a random variable $X_0$ independent of the $Y_t$; in this case, the measurable and bounded initial distribution is $\rho_0(A)=\mathbb{P}(X_0 \in A )$.} and define the following stochastic process for any $t \in \mathbb{N}_{\geq 1}$: 
\begin{equation}\label{eq:stocasticprocess}
    X_{t+1}^{(\n)} = F_{t+1}^{(\n)}(X_{t}^{\n}).
\end{equation}
Then, for $x \in I$ the transition probabilities are defined as:
\begin{equation}\label{eq:transitionprobabilities}
    P_{x}^{(\n)}(A) = \mathbb{P}(X_{t+1}^{(\n)} \in A | X_t^{(\n)}=x) = \mathbb{P}(F_{t+1}^{\n}(x)\in A) = \mathbb{P}(F_1^{(\n)}\in A),
\end{equation}
because all the $F_t^{(\n)}$, $t \in \mathbb{N}_{\geq 1}$, have the same distribution. By Assumption \ref{itm:C1}, and $\forall x\,:\,\sigma_{\n}(x)>0$, we have:
\begin{equation}\label{eq:stocastickernel}
\begin{split}
    P_{x}^{(\n)}(A) &= \int_{\mathbb{R}} 1_{A}(F_1(x))g_{a}(y)\,dy = \int_{\mathbb{R}}   1_{A}(T(x)+\sigma_{\n}(x)y)g_{a}(y)\,dy\\
                    &= \int_{\mathbb{R}} 1_{A}(z)\frac{1}{\sigma_{\n}(x)}g_{a}\left(\frac{z-T(x)}{\sigma_{\n}(x)}\right)\,dz \overset{\text{def}}{=} \int_{A} p_{\n}(x,z)\,dz,
\end{split}                  
\end{equation}
where $p_{\n}(x,y)$ is the stochastic kernel and in the third equality, we use the following change of variable: $z=T(x)+\sigma_{\n}(x)y$. Instead, if for some $x$ we have $\sigma_{\n}(x)=0$, the transition probability verifies $P_{x}(A)=1_{A}(T(x))$ (meaning $P_{x} = \delta_{T(x)}$, where $\delta_{T(x)}$ is the Dirac mass at $x$). So, the stochastic kernel is
\begin{equation}\label{eq::stochastickernel}
    p_{\n}(x,y) = \frac{1}{\sigma_{\n}(x)}g_{a}\left(\frac{z-T(x)}{\sigma_{\n}(x)}\right) = \frac{1}{\sigma_{\n}(x)}\chi_{a}\left(\frac{z-T(x)}{\sigma_{\n}(x)}\right)e^{-\frac{(z-T(x))^2}{\sigma_{\n}^2(x)}},
\end{equation}
with $\int p_{\n}(x,y)\,dy=1$ for every $x \in I, \sigma_{\n}(x)>0$. Therefore, $z \in [s_{a,-}, s_{a,+}]$ with $s_{a,\pm}=T(x) \pm a \sigma_{\n}(x)$.\\
\indent Since the noise varies in a neighborhood of 0, we need to enlarge the domain of definition of the map $T$ to take into account the action of the noise. More precisely, we extend the domain of $T$ to the larger interval $\tilde{I} \overset{\text{def}}{=} [-\Gamma, 1]$. On the interval $[-\Gamma,0]$, $T$ is extended continuously and decreasing with $T(-\Gamma) < \Delta$ and with the same slope of $T$ restricted to the interval $[0,\varepsilon_{\n}]$, where $\varepsilon_{\n}$ is given in \ref{itm:B11}.  With abuse of language and notation, we will continue to call $T$ the map after its redefinition, and we put $I = \tilde{I}$. We have the following remark.
\begin{remark}
\cite{baladi1996strong} consider a similar extension to allow perturbations with additive noise; in particular, it was supposed that $T$ admits an extension to some compact interval $J \supset I$, preserving all the previous properties and satisfying $T(\partial J) \subset \partial J$. Notice that, in our case and with these extensions, the map $T$ could lose smoothness in 0. However, this regularity persists on the interval $(0, 1)$, and this will be enough
for the subsequent considerations, particularly for the construction of the stationary measure whose support will be strictly included in $(0, 1)$.
\end{remark}

We look at the \textit{Markov operator} corresponding to the transition probabilities. To this aim, we denote by $\mathcal{M}$ the space of (real-valued) Radon measure on $\tilde{I}$, and by $\mathcal{L}\,:\,\mathcal{M}\rightarrow\mathcal{M}$ the Markov operator acting by 
\begin{equation*}
    \mathcal{L}\rho(A) = \int_{\mathbb{R}}  P_{x}^{(\n)}(A)\,d\rho(x),\quad \rho \in \mathcal{M},
\end{equation*}
for every Borel set $A \in I$, or, equivalently,
\begin{equation*}
    \int_{\mathbb{R}}  \varphi\,d\mathcal{L}\rho = \int_{\mathbb{R}} \int_{\mathbb{R}}  \varphi(y)dP_{x}^{(\n)}(y)d\rho(x)
\end{equation*}
for all $\varphi \in C^{0}$, where $C^{0}$ denotes the Banach space of continuous function on $I$ with the sup norm. In our case $\sigma_{\n}(\tilde{x})=0$ in at most two points, $\tilde{x}=0,1.$ Therefore in such a case we could write 
$$
\int \varphi d\mathcal{L}\rho =\iint_{\mathbb{R}\times \{\{0\}\cup\{1\}\}}\ \varphi(y) dP_x(y) d\rho(x)+\iint_{\mathbb{R}\times \{\mathbb{R}/\{0\}\cup \{1\}\}}\varphi(y) dP_x(y) d\rho(x)=
$$
$$
[\phi(T(0))\rho(\{0\})+\phi(T(1))\rho(\{1\}])+\iint_{\mathbb{R}\times \{\mathbb{R}/\{0\}\cup\{1\}\}}\varphi(y)p_{\n}(x,y)dyd\rho(x).
$$
We note that $\mathcal{L}:L^{1} \rightarrow L^{1}$ is an isometry, where $L^{1}$ is intended, from now on, with respect to the Lebesgue measure. In Subsection \ref{subsec::stationary}, we will be interested in stationary measures $\rho$, which are absolutely continuous with respect to the Lebesgue measure and, therefore, non-atomic. If we denote by $h \in L^{1}$ the density of such a measure, it will be a fixed point of the operator $\mathcal{L}\,:L^{1} \rightarrow L^{1}$, i.e., 
\begin{equation}\label{eq::fixedpoint}
    (\mathcal{L} h)(y) = \int_{\mathbb{R}}  p_{\n}(x, y)h(x)\,dx,\quad h \in L^{1},
\end{equation}
where $p_{\n}$ is the stochastic kernel in formula \eqref{eq::stochastickernel}. In particular, it should satisfy
\begin{equation}\label{eq::fixedpoint1}
    h(z) = \int_{\mathbb{R}}  p_{\n}(x, z)h(x)\,dx.
\end{equation}
We will return to the previous formula in Section \ref{sec::mathematical_properties}. Now, in the next section, we present a slightly different, yet equivalent (see, e.g., \cite{kifier1986ergodic}), approach for representing the model in Equation \eqref{eq:differenceequation}, namely the random transformation approach.
\section{Random transformations}\label{sec::randomtrasformation}
We consider the following identity:
\begin{equation}\label{eq::randomtransformations}
    T_{\eta}(x)=T(x)+\sigma_{\n}(x)\eta,~~~~~\,\eta \in [-a,a].
\end{equation}
Assumption \ref{itm:C1} implies that $T_{\eta}$ can be seen as a family of random maps of $I$ into itself. Let $\theta(\eta) \overset{\text{def}}{=} g_{a}(\eta)\,d\eta$ be the probability measure of $\eta$ with density $g_{a}$. Now, let $\rho \in \mathcal{M}$ (see Section \ref{sec::markovchain} for the definition of $\mathcal{M}$) a measure with density $h \in L^{1}$. By requiring its invariance, we have that:
\begin{equation*}
    \mathcal{L}\rho(A) = \rho(A) = \int_{\mathbb{R}} P_{x}^{(\n)}(A)\,d\rho(A).
\end{equation*}
In addition, by using the definition of $\rho(A)$, we have 
\begin{equation*}
    \begin{split}
        \rho(A) &= \int_{\mathbb{R}} 1_{A}(x)h(x)\,dx  = \int_{\mathbb{R}} \int_{\mathbb{R}} 1_{A}(T_{\eta}(x))h(x)\,dx\,d\theta(\eta)\\
                &= \int_{\mathbb{R}} d\theta(\eta)\int_{\mathbb{R}} 1_{A}(T_{\eta}(x))h(x)\,dx =\int_{\mathbb{R}}d\theta(\eta)\int_{\mathbb{R}} 1_{A}(x)\mathcal{L}_{\eta}h(x)\,dx,
    \end{split}
\end{equation*}
where $\mathcal{L}_{\eta}:L^{1}\rightarrow L^{1}$ is the Perron-Fr\"obenius operator associated to the map $T_{\eta}$\footnote{The Perron-Fr\"obenius operator associated to the map $T_{\eta}$ is defined by the duality relationship $\int_{\mathbb{R}}\mathcal{L}_{\eta}h\,g\,dx=\int_{\mathbb{R}}hg \circ T_{\eta}(x)\,dx$, where $h \in L^{1}$ and $g \in L^{\infty}$.}. By changing the order of integration again, we finally get that the Markov operator in Equation \eqref{eq::fixedpoint} satisfies for any $h \in L^1$ the following identity
\begin{equation}\label{eq::markovLepsilon}
    (\mathcal{L}h)(x) = \int (\mathcal{L}_{\eta}h)(x)\,d\theta(\eta).
\end{equation}

\noindent We now present a correlation integral that we will use to derive some statistical properties of our model. In order to do this, let $(\eta_t)_{t\geq 1}$ be an \textit{i.i.d.} stochastic process where each $\eta_t$ has distribution $\theta$, $\bar{\eta}_t \overset{\text{def}}{=} (\eta_{1}, \ldots, \eta_{t})$, and $\theta^{t}(\bar{\eta}_t)\overset{\text{def}}{=}\theta(\eta_1)\times\ldots\times\theta(\eta_t)$ the product measure. We call the following concatenation, or composition, of randomly chosen maps $T_{\eta_t} \circ\ldots\circ T_{\eta_1}$, where $(\eta_s)_{s=1}^{t}$ are \textit{i.i.d.} with distribution $\theta$, as \textit{random transformation}. In particular, the above-mentioned correlation integral reads as   
\begin{equation}\label{eq::correlationintegral}
    \int_{\mathbb{R}}(\mathcal{L}^{t}h)(x)g(x)\,dx=\int_{\mathbb{R}}\int_{\mathbb{R}}h(x)g(T_{\eta_1}\circ\ldots\circ T_{\eta_t})(x)d\theta^{t}(\bar{\eta}_t)\,dx,
\end{equation}
where $h \in L^{1}$ and $g \in L^{\infty}$. Notice that in \cite{lillo2021analysis}, authors use the Lebesgue measure instead of the probability measure $\theta$. By using the latter, we do not need to modify the map $T$ as in the Lebesgue measure case.

\section{Mathematical properties of the model}\label{sec::mathematical_properties}
In this section, we investigate some mathematical properties of our model. In Subsection \ref{subsec::stationary}, we show the existence and uniqueness of an absolutely continuous stationary measure and establish its convergence to the invariant measure of the deterministic map. This result allows us to define the Lyapunov exponent and prove its continuity with respect to the model parameters. We also discuss some limit theorems in Subsection \ref{subsec::limittheorems}. Finally, Subsection \ref{subsec::multifractal} concerns a multi-fractal analysis of our model.
\subsection{Stationary Measure and Stochastic Stability}\label{subsec::stationary} 
In this subsection, we establish the existence of a unique stationary measure for the Markov chain associated with our model.\\
\indent In Section \ref{sec::markovchain}, we extended the domain of definition of the map $T$ to the set $\tilde{I}= [-\Gamma, 1]$. In particular, if the constant $a$ satisfies the bound in Equation \eqref{eq:boundona2}, then the support of the stationary measure $\mu_{\n}$, say $\text{supp}(\mu_{\n}) \subset I_{\Gamma}$, where 
\begin{equation}\label{eq::supportone}
    I_{\Gamma}\overset{def}{=}\left[\frac{1}{2}T\left(1-\frac{\Gamma}{2}\right), 1-\frac{\Gamma}{2}\right].
\end{equation}
Indeed, on the one hand, if we take a point $z \in \left(1-\frac{\Gamma}{2}, 1\right]$, then it will be surely greater than $T(x) \pm a\sigma_{\n}(x)$, $x \in I$. In order to understand the left-hand side of the interval in Equation \eqref{eq::supportone}, suppose first that $T(0)=q>T\left(1-\frac{\Gamma}{2}\right)$. If $z \in \text{supp}(h)$, being $h$ the density of $\mu_{\n}$, then $T(x)\in [\tilde{s}_{a,-},\tilde{s}_{a,+}]$ with $\tilde{s}_{a,\pm}=z \pm a\sigma_{\n}(x)$, where $x\in\text{supp}(\mu_{\n})$ too. If $z$ is in a neighborhood of 0, then the values of $x$, which could contribute in $T(x)$ are smaller than $\left(1-\frac{\Gamma}{2}\right)$ by choice of $a$. So, if we take $z < \frac{1}{2}T\left(1-\frac{\Gamma}{2}\right)$ and we require that $a\sigma_{\n}(x)< \frac{1}{2}T\left(1-\frac{\Gamma}{2}\right)$, then $z\notin\text{supp}(h)$.\\
\indent If, instead, the constant $a$ satisfies Equation \eqref{eq:boundona1}, then the interval $I_{\varepsilon_{\n},\Gamma}\overset{def}{=}\left[\varepsilon_{\n},1-\frac{\Gamma}{2}\right]$ is invariant for $T_{\eta}$, $\forall \eta \in [-a,a]$ (see Equation \eqref{eq::randomtransformations}). In particular, if $x \in I_{\varepsilon_{\n},\Gamma}^{c}$, where $I_{\varepsilon_{\n},\Gamma}^{c}$ is the complementary set of $I_{\varepsilon_{\n},\Gamma}$, then $x$ will spend finitely many times in  $I_{\varepsilon_{\n},\Gamma}^{c}$; note that $x=0$ is a fixed point. In particular, the chain $X_t^{(\n)}$ visits finitely many times any open set $K$ in $I_{\varepsilon_{\n},\Gamma}^{c}$. Therefore, the chain is not recurrent and $\mu_{\n}(K)=0$.\\
\indent The above considerations implies that the subspace $\{h \in L^{1}\,:\,\text{supp}(h) \subset I_{\Gamma}\}$ (resp. $\{h \in L^{1}\,:\,\text{supp}(h) \subset I_{\varepsilon_{\n},\Gamma}\}$) is $\mathcal{L}$-invariant, and that the stochastic kernel $p_{\n}(x,z)$ has total variation of order $\frac{1}{\sigma_{\n}(x)}$. Therefore, it is \textit{uniformly bounded}\footnote{In general, we say that a stochastic kernel $p(x,y)$ has \textit{uniformly bounded variation} if $|p(x,\cdot)|_{TV} \in L^{\infty}$, i.e., there is $C>0$ such that $|p(x,\cdot)|_{TV}\leq C$ for almost every $x \in I$.} when restricted to $I_{\Gamma}\times I_{\Gamma}$ (resp. $I_{\varepsilon_{\n},\Gamma}\times I_{\varepsilon_{\n},\Gamma}$). In particular, we can apply Proposition 4.2 and Theorem 4.3 in \cite{lillo2021analysis} to conclude that the following proposition hold.
\begin{proposition}\label{prop::uniqueness}
    The random system in Equation \eqref{eq::randomsystem} admits a unique stationary measure $\mu_{\n}$ with density $h_{\n}$ of bounded variation and such that $[d_1, d_2] \subset \text{supp}(h_{\n})$. Moreover, for any observable $f \in L^{1}$, $g \in BV$, there exists $0<r<1$ and $C>0$, depending only on the system, such
that, for all $t \in \mathbb{N}_{\geq 0}$, we have
    \begin{equation*}
    \Big|\int_{\mathbb{R}}\int_{\mathbb{R}}(\mathcal{L}^{t}f)(x)g(x)\,dx-\int_{\mathbb{R}}f\,d\mu_{\n}\int_{\mathbb{R}}g(x)\,dx\Big|\leq C r^{t} \|f\|_{1}\|g\|_{BV}.
        \end{equation*}
\end{proposition}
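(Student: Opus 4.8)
The plan is to reduce everything to the spectral (quasi-compactness) machinery of \cite{lillo2021analysis} and then to extract the two stated conclusions — uniqueness and exponential decay of correlations — as consequences of a spectral gap for $\cL$ on an appropriate $BV$-type space. First I would fix the setting: work with $\cL$ acting on the $\cL$-invariant subspace $V_\Gamma := \{h\in L^1 : \supp(h)\subset I_\Gamma\}$ (respectively $V_{\varepsilon_\n,\Gamma}$), which was shown above to be invariant and on which the stochastic kernel $p_\n(x,z)$ has variation uniformly bounded by a constant $C\sim 1/\inf_{I_\Gamma}\sigma_\n(x)$. The point of the support restriction is precisely that $\sigma_\n$ is bounded away from $0$ on $I_\Gamma$, so the kernel is genuinely uniformly bounded in $BV$ there. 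With this in hand, Proposition 4.2 of \cite{lillo2021analysis} gives a Lasota–Yorke (Doeblin–Fortet) inequality of the form $\|\cL^n h\|_{BV}\le \alpha^n \|h\|_{BV} + C_n\|h\|_1$ with $\alpha<1$, which together with the compact embedding $BV\hookrightarrow L^1$ and the Ionescu–Tulcea–Marinescu / Hennion theorem yields quasi-compactness of $\cL$ on $BV(I_\Gamma)$: the spectrum outside a disc of radius $<1$ consists of finitely many eigenvalues of finite multiplicity, all peripheral ones lying on the unit circle.

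Second, I would pin down the peripheral spectrum to conclude there is exactly one ergodic absolutely continuous stationary measure. Since $\cL$ is a Markov operator ($\|\cL h\|_1=\|h\|_1$ for $h\ge 0$, positivity-preserving), $1$ is an eigenvalue and the peripheral eigenvalues are roots of unity with eigenfunctions whose moduli are themselves fixed; the supports of the ergodic stationary densities are disjoint invariant sets. This is where Assumption \ref{itm:A1}-(b) enters: topological transitivity of $T$ (and of all iterates $T^t$) on $[d_1,d_2]$ forces the support of any absolutely continuous stationary density to contain $[d_1,d_2]$ — indeed the noise smears mass into a neighborhood of $T(x)$ and transitivity propagates positivity throughout $[d_1,d_2]$ — so two distinct ergodic stationary measures cannot have disjoint supports. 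Transitivity of each $T^t$ rules out periodic peripheral eigenvalues $\ne 1$ by the same mixing-type argument, so $1$ is a simple eigenvalue and is the only eigenvalue on the unit circle. This gives the unique stationary measure $\mu_\n$ with density $h_\n\in BV$ and $[d_1,d_2]\subset\supp(h_\n)$, which is the first assertion of Theorem 4.3 of \cite{lillo2021analysis} transplanted to our kernel; I would simply verify that our $p_\n$ (built from the truncated-Gaussian $g_a$ of \ref{itm:C1}) meets the hypotheses there, the only genuinely new input being that the variance-like function $\sigma_\n$ is heteroscedastic but bounded above and below on $I_\Gamma$.

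Third, the decay-of-correlations estimate follows by the standard spectral decomposition $\cL = \Pi + N$, where $\Pi h = \big(\int h\big)\, h_\n$ is the rank-one projection onto the eigenline of $1$ and $N = \cL(\mathrm{Id}-\Pi)$ has spectral radius $r<1$, with $\Pi N = N\Pi = 0$; hence $\cL^t = \Pi + N^t$ and $\|N^t\|_{BV\to BV}\le C r^t$ for some $C>0$ (any $r$ strictly between the essential spectral radius/second eigenvalue modulus and $1$ works, absorbing polynomial factors into $C$). Pairing against $g\in BV$ and starting from $f\in L^1$, I would write
\[
\int (\cL^t f)\, g\,dx - \int f\,d\mu_\n\int g\,dx = \int \big(N^t(f - \Pi f)\big)\, g\,dx,
\]
but since $N^t$ need not be bounded $L^1\to L^1$ uniformly, I instead split off one step: $\cL f\in BV$ whenever $f\in L^1$ by the Lasota–Yorke bound (the $C_1\|f\|_1$ term), so $\cL^t f = \Pi f + N^{t-1}(\cL f - \Pi(\cL f))$ and $\|N^{t-1}\|_{BV\to L^1}\le \|N^{t-1}\|_{BV\to BV}\le C r^{t-1}$; combining with $\|\cL f\|_{BV}\le C'\|f\|_1$ and $|\int g\, d\nu|\le \|g\|_{BV}\|\nu\|$ gives the claimed bound $Cr^t\|f\|_1\|g\|_{BV}$ after renaming constants. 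The main obstacle, and the only place real work is needed, is establishing the uniform $BV$-bound on the kernel and the resulting Lasota–Yorke inequality in the heteroscedastic setting — i.e.\ checking that $|p_\n(x,\cdot)|_{TV}\in L^\infty(I_\Gamma)$ with the explicit order $1/\sigma_\n(x)$ survives the $x$-dependence of $\sigma_\n$ and of the bump function $\chi_a$ — and in verifying that the transitivity hypothesis \ref{itm:A1}-(b) indeed forces support connectivity; once these are in place the conclusion is a black-box application of \cite[Prop.~4.2, Thm.~4.3]{lillo2021analysis}.
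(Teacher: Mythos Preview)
Your proposal is correct and follows essentially the same route as the paper's proof: restrict to the invariant subspace supported in $I_\Gamma$ (resp.\ $I_{\varepsilon_\n,\Gamma}$), use the uniform $BV$-bound on the kernel to get a Lasota--Yorke inequality and hence quasi-compactness of $\cL$ via Ionescu--Tulcea--Marinescu, and then invoke topological transitivity of all powers $T^t$ on $[d_1,d_2]$ to force simplicity of the eigenvalue $1$ and absence of other peripheral eigenvalues, exactly as in Proposition~4.2 and Theorem~4.3 of \cite{lillo2021analysis}. Your treatment of the correlation bound is in fact slightly more explicit than the paper's, which only states exponential decay for $f\in BV$: your one-step smoothing trick $\cL:L^1\to BV$ (from $\|\cL f\|_{BV}\le (C+1)\|f\|_1$) is precisely what is needed to upgrade to $f\in L^1$ as the statement requires.
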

\begin{proof}
Let $BV$ the Banach space of bounded variation functions on $I_{\Gamma}$ (or $I_{\Gamma,\varepsilon_{\n}}$) equipped with the complete norm 
\begin{equation*}
    \|f\|_{BV}=|f|_{TV}+\|f\|_1,
\end{equation*}
where $|f|_{TV}$ is the total variation of the function $f \in L^{1}$. Because the stochastic kernel has uniformly bounded variation on $I_{\Gamma}\times I_{\Gamma}$ (or on $I_{\Gamma,\varepsilon_{\n}} \times I_{\Gamma,\varepsilon_{\n}}$), we have 
\begin{equation*}
    \|\mathcal{L}\rho\|_{TV} \leq C\|\rho\|_{1}\quad\text{and}\quad\|\mathcal{L}\rho\|_{BV}\leq (C+1)\|\rho\|_{1};
\end{equation*}
see Lemma 4.1 in \cite{lillo2021analysis}. By the previous equation, we have 
\begin{equation*}
    \|\mathcal{L}\rho\|_{BV} \leq (C+1)\|\rho\|_{1}\leq \eta \|\rho\|_{BV} + (C+1)\|\rho\|_{1}
\end{equation*}
for any $\eta<1$; this is the \textit{Lasota-Yorke inequality} for the operator $\mathcal{L}$. The latter, plus the fact that $BV$ is compactly embedded in $L^1$, implies that the operator $\mathcal{L}$ has the following spectral decomposition
\begin{equation*}
    \mathcal{L} = \sum_{i} v_i \Pi_{i} + Q, 
\end{equation*}
where all $v_i$ are eigenvalues of $\mathcal{L}$ of modulus 1, $\Pi_{i}$ are finite-rank projectors onto the associated eigenspaces, $Q$ is a bounded operator with a spectral radius strictly less than one. They satisfy the following properties:  
\begin{equation*}
    \Pi_{i}\Pi_{j}=\delta_{i j}\Pi_{i},\quad Q\Pi_{i}=\Pi_{i}Q=0.
\end{equation*}
Standard techniques show that $1$ is an eigenvalue and therefore the chain will admit finitely many absolutely continuous ergodic stationary measures, with supports that are mutually disjoint up to sets of zero Lebesgue measure. Moreover the peripheral spectrum is completely cyclic. We require that 1 is a simple eingenvalue of $\mathcal{L}$, and that there is no other peripheral eigenvalue, hence implying that our Markov chain is mixing and therefore the norm of $\|\mathcal{L}^{t} f\|_{BV}$ goes exponentially fast to zero when $t\rightarrow\infty$, for $f \in BV$ and $\int_{\mathbb{R}} f\,dx=0$ (exponential decay of correlations).
These properties, which are consequences of the Ionescu-Tulcea-Marinescu theorem, are summarized by saying that the operator $\mathcal{L}$ acting on $BV$ is quasi-compact, see, e.g., \cite{hennion2001limit}; we will implicitly assume in the following that the operator has the mixing property too. In order to prove that 1 is a simple eigenvalue of $\mathcal{L}$, and that there is no other peripheral eigenvalue we first observe that the peripheral spectrum of $\mathcal{L}$ consists of a finite union of finite cyclic groups; then there exists $t \in \mathbb{N}_{\geq 1}$ such that 1 is the unique peripheral eigenvalue of $\mathcal{L}^{t}$.  It suffices then to show that the corresponding eigenspace is one-dimensional. Standard arguments show there exists a basis of positive eigenvectors for this subspace, with disjoint supports.
At this point we use 
a simple generalization of Theorem 4.3 in \cite{lillo2021analysis} for the powers of $\mathcal{L}^n$ plus the assumption on the topological transitivity of $T^n, n\ge 1$ on $[d_1, d_2]$  to get that the basis is one dimensional.

\end{proof}


\indent We investigate now the stochastic stability of the system, which means to determine if a sequence of stationary measure will converge \textit{weakly}\footnote{Notice that this result could be strengthened by showing that $\|h_{\n}-h\|_{1}\rightarrow 0$, which is called the \textit{strong stochastic stability}.} to the invariant measure of the unperturbed map. In our case, the sequence of probability measure is given by $\mu_{\n}\overset{\text{def}}{=}h_{\n}\,dx$. Notice that $h_{\n} \in L^{\infty},\,\forall\,\n$ because they have finite total variation. Nonetheless, to prove the above-mentioned stochastic stability, we need the following assumption  
\begin{enumerate}[label=(A$_{p}$)]   
    \item\label{itm:Ap}  There exists $p>1$ and $C_p > 0$ such that for all $\n \geq 1$ we have $\|h_n\|_{p} \leq C_p$; the $L^{p}$ norm is taken again with respect to Lebesgue.
\end{enumerate}

We have the following 
\begin{proposition}\label{prop::stochasticstability}
    For the random system in Equation \eqref{eq::randomsystem}, under Assumption \ref{itm:Ap}, the sequence of stationary measure $\mu_{\n}$ converges weakly to the unique $T$ invariant probability $\mu$ as $\n\rightarrow\infty$, in the sense that for any real-valued function $g \in C^{0}(I)$, we have
    \begin{equation*}
        \int_{\mathbb{R}} g d\mu_{\n} \rightarrow \int_{\mathbb{R}} g d\mu,\,\,\,\text{as}\,\,\,\n\rightarrow\infty.
    \end{equation*}
\end{proposition}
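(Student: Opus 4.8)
The plan is to combine a soft compactness argument for the family $\{\mu_\n\}$ with the stationarity identity, using Assumption \ref{itm:Ap} precisely to neutralize the fact that the stochastic kernel $p_\n(x,\cdot)$ degenerates to the Dirac mass $\delta_{T(x)}$ as $\n\to\infty$.

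First I would extract a limit point. Since $1<p<\infty$ makes $L^1$'s subspace $L^p$ reflexive and $\|h_\n\|_p\le C_p$ by \ref{itm:Ap}, Banach--Alaoglu yields a subsequence $(\n_k)$ and $h_*\in L^p$ with $h_{\n_k}\rightharpoonup h_*$ weakly in $L^p$. As $I$ is bounded, $C^0(I)\subset L^{p'}$ (with $p'$ the conjugate exponent), so $\int g\,h_{\n_k}\,dx\to\int g\,h_*\,dx$ for every $g\in C^0(I)$; moreover $\int h_*\,dx=1$ and $h_*\ge0$ a.e., since the nonnegative functions of integral one form a convex norm-closed, hence weakly closed, subset of $L^p$. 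Therefore $\mu_*\overset{\text{def}}{=}h_*\,dx$ is an absolutely continuous probability measure and $\mu_{\n_k}\to\mu_*$ weakly. It then suffices to prove $\mu_*=\mu$: every subsequence of $(\mu_\n)$ would then have a further subsequence converging weakly to $\mu$, forcing $\mu_\n\to\mu$ weakly.

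Second I would show $\mu_*$ is $T$-invariant. Writing $(U_\n g)(x)\overset{\text{def}}{=}\int g(y)\,dP_x^{(\n)}(y)=\mathbb{E}[g(T(x)+\sigma_\n(x)Y_1)]$ for the dual action on a test function, stationarity of $\mu_\n$ reads $\int g\,d\mu_\n=\int (U_\n g)\,d\mu_\n$ for all $g\in C^0(I)$. Along $(\n_k)$ I would split
\[
\int (U_{\n_k}g)\,d\mu_{\n_k}-\int g\circ T\,d\mu_*=\int\big(U_{\n_k}g-g\circ T\big)\,d\mu_{\n_k}+\Big(\int g\circ T\,d\mu_{\n_k}-\int g\circ T\,d\mu_*\Big).
\]
The second bracket tends to $0$ because $g\circ T\in C^0(I)$ (the extended $T$ is continuous on $\tilde I$) and $\mu_{\n_k}\to\mu_*$ weakly. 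For the first term I would use Hölder with exponents $p,p'$ to bound it by $\|U_{\n_k}g-g\circ T\|_{p'}\,\|h_{\n_k}\|_p\le C_p\,\|U_{\n_k}g-g\circ T\|_{p'}$; since $\|U_{\n_k}g-g\circ T\|_\infty\le 2\|g\|_\infty$, and by uniform continuity of $g$ together with $|\sigma_{\n}(x)Y_1|\le a\,\sigma_{\n}(x)\to0$ and the boundedness of $Y_1$ the integrand $|U_{\n_k}g-g\circ T|$ tends to $0$ pointwise a.e. $x$, dominated convergence against the \emph{fixed} Lebesgue measure gives $\|U_{\n_k}g-g\circ T\|_{p'}\to0$. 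Passing to the limit in $\int g\,d\mu_{\n_k}=\int (U_{\n_k}g)\,d\mu_{\n_k}$ yields $\int g\,d\mu_*=\int g\circ T\,d\mu_*$ for every $g\in C^0(I)$, i.e. $T_*\mu_*=\mu_*$.

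Finally, $\mu_*$ is an absolutely continuous $T$-invariant probability measure, so by the uniqueness part of Assumption \ref{itm:A1}-(c) it equals $\mu$ (equivalently $\eta$), which closes the argument. The hard part is exactly the error term $\int(U_{\n_k}g-g\circ T)\,d\mu_{\n_k}$: both the integrand and the integrating measure move with $k$, and because the kernel becomes singular in the limit one cannot invoke dominated convergence directly for the varying measures $\mu_{\n_k}$; Assumption \ref{itm:Ap} is what converts this into an $L^{p'}$-estimate against Lebesgue measure, where the degeneracy of the noise is harmless. (If one instead had the stronger bound $\|h_\n\|_\infty\le C$ uniformly, or uniform convergence $\sup_x\sigma_\n(x)\to0$, the same scheme goes through with an even simpler estimate of this term.)
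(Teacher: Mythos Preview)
Your proposal is correct and follows essentially the same strategy the paper indicates (it defers the proof to \cite{lillo2021analysis}, Theorem~5.3, and Observation~\ref{obs::observationone} records that the argument goes through the random-transformation representation and the continuity of $\eta\mapsto T_\eta$): extract a limit of the densities by compactness, pass to the limit in the stationarity identity to get $T$-invariance of the limit measure, and conclude via the uniqueness in \ref{itm:A1}-(c). Your dual operator $U_\n g(x)=\mathbb{E}[g(T(x)+\sigma_\n(x)Y_1)]=\int g(T_\eta(x))\,d\theta(\eta)$ is exactly the random-transformation average $\mathcal{U}$ of the paper, and your H\"older step with exponents $(p,p')$ is precisely how Assumption~\ref{itm:Ap} is meant to be used to trade the degenerating kernel against a fixed $L^{p'}$-norm over Lebesgue measure.
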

\begin{proof}
    See Theorem 5.3 in \cite{lillo2021analysis}.
\end{proof}
We will see in the next section that with the preceding assumption we can
prove the convergence of the Lyapunov exponent (Proposition \ref{cly}) and then
verify it numerically on the examples in Section \ref{sec::example}, which is an indirect indication of the validity
of $(A_p).$\\

We conclude this section with the following observations.
\begin{observation}\label{obs::observationone}
Proposition \ref{prop::stochasticstability} is proved by using the representation of the Markov chain in terms of random transformation; in particular, one uses the correlation integral in Equation \eqref{eq::correlationintegral} and the continuity of the map $\eta \rightarrow F_{\eta} \in C^{0}(I)$. 
\end{observation}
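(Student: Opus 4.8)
The plan is to prove the weak convergence $\mu_\n\to\mu$ by a soft compactness argument: show that every weak-$\ast$ limit point of the family $\{\mu_\n\}$ is an \emph{absolutely continuous} $T$-invariant probability measure, and then invoke the uniqueness in Assumption \ref{itm:A1}-(c) to identify that limit with $\mu$; since the limit is then independent of the chosen subsequence, the whole sequence converges. Compactness is free here: $\tilde I$ is compact, so $\{\mu_\n\}$ is tight and by Prokhorov's theorem every subsequence has a weakly convergent further subsequence.

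The first substantive step is to use Assumption \ref{itm:Ap} to pin down the nature of any such limit. The uniform bound $\|h_\n\|_p\le C_p$ with $p>1$ makes $\{h_\n\}$ uniformly integrable, hence (Dunford--Pettis) weakly sequentially precompact in $L^1$; passing to a further subsequence, $h_\n\rightharpoonup h_\infty$ weakly in $L^1$, with $h_\infty\ge 0$, $\int h_\infty\,dx=1$ and $\|h_\infty\|_p\le C_p$ by lower semicontinuity of the norm. Testing against $g\in C^0(\tilde I)\subset L^\infty$ shows that the weak-$\ast$ limit of $\mu_\n$ along this subsequence is exactly $h_\infty\,dx=:\mu_\infty$; in particular it is absolutely continuous.

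Next I would show that $\mu_\infty$ is $T$-invariant, using the random-transformation representation. From stationarity $\mathcal{L}h_\n=h_\n$ and Equation \eqref{eq::markovLepsilon} (equivalently \eqref{eq::correlationintegral} with $t=1$), for $g\in C^0(\tilde I)$,
\begin{equation*}
\int g\,d\mu_\n=\int(\mathcal{L}h_\n)(x)\,g(x)\,dx=\iint h_\n(x)\,g\bigl(T(x)+\sigma_\n(x)\eta\bigr)\,d\theta(\eta)\,dx .
\end{equation*}
Set $r_\n(x):=\int_{-a}^{a}\bigl|g(T(x)+\sigma_\n(x)\eta)-g(T(x))\bigr|\,d\theta(\eta)$; since $g$ is uniformly continuous on the compact $\tilde I$ and $\sigma_\n(x)\to 0$ for every $x$ (Assumption \ref{itm:B1}), one has $r_\n\to 0$ pointwise with $0\le r_\n\le 2\|g\|_\infty$, so by Hölder's inequality with conjugate exponent $p'$ and the uniform bound on $\|h_\n\|_p$,
\begin{equation*}
\Bigl|\int g\,d\mu_\n-\int h_\n(x)\,g(T(x))\,dx\Bigr|\le\int h_\n(x)\,r_\n(x)\,dx\le C_p\,\|r_\n\|_{p'}\xrightarrow[\n\to\infty]{}0
\end{equation*}
by dominated convergence (the last function no longer depends on $h_\n$). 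Combining this with $h_\n\rightharpoonup h_\infty$ in $L^1$ tested against $g\circ T\in L^\infty$, and with $\int g\,d\mu_\n\to\int g\,d\mu_\infty$, we get $\int g\,d\mu_\infty=\int g\circ T\,d\mu_\infty=\int g\,d(T_\ast\mu_\infty)$ for all $g\in C^0(\tilde I)$, i.e.\ $T_\ast\mu_\infty=\mu_\infty$. (Alternatively, keeping a general $t$ in \eqref{eq::correlationintegral} and passing to the limit in the random composition $T_{\eta_1}\circ\cdots\circ T_{\eta_t}(x)\to T^t(x)$ — for each $x$ and $\bar\eta_t$, by the continuity of the maps $F_\eta\in C^0(I)$ and $\sigma_\n\to0$ — yields $T^t_\ast\mu_\infty=\mu_\infty$ for every $t\ge1$.) By Assumption \ref{itm:A1}-(c) the only absolutely continuous $T$-invariant probability measure is $\mu$, hence $\mu_\infty=\mu$, and since this holds for every subsequential limit, $\mu_\n\to\mu$ weakly.

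The main obstacle is the second step. Because the kernel $p_\n(x,\cdot)$ has total variation of order $1/\sigma_\n(x)\to\infty$ and concentrates onto the graph of $T$ as $\n\to\infty$, the uniform $BV$ control from Proposition \ref{prop::uniqueness} does not by itself prevent the stationary mass from escaping to a singular part of the limit — for instance to the atom at the fixed point $0$ in case \ref{itm:B11}, where $\delta_0$ is $T$-invariant. It is precisely the uniform $L^p$ estimate \ref{itm:Ap} that supplies the uniform integrability needed to keep every weak limit absolutely continuous; once that is secured, the passage to the limit via the random-transformation representation and the identification through \ref{itm:A1}-(c) are soft.
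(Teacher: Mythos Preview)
Your argument is correct and follows the route the observation sketches: you use the random-transformation/correlation-integral identity to write $\int g\,d\mu_\n$ as an average of $g\circ T_\eta$, let the random maps collapse to $T$ as $\sigma_\n\to0$, and then identify the limit via the $L^p$ bound \ref{itm:Ap} (to force absolute continuity of any weak limit) together with the uniqueness in \ref{itm:A1}-(c). This is precisely the strategy alluded to in the observation and carried out in the cited reference, so there is nothing to correct.
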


\begin{observation}\label{obs::observationtwo}
    Proposition \ref{prop::stochasticstability} can be extended to periodic unimodal maps under the following additional assumption:
    \begin{enumerate}[label=(\text{A}$p$.1)]   
    \item\label{itm:Ap1}  $\forall\,\n$ sufficiently large and $\forall\,x\in\text{supp}(\mu_{\n})$ we have that $|T^{'}(x)|<1$.
    \end{enumerate}
In particular, if $T$ has a globally attracting periodic orbit carrying the discrete measure $\mu$ and satisfies \ref{itm:Ap1}, then the sequence $\mu_{\n}$ converges to $\mu$ in the weak-$^{\star}$topology as $\n\rightarrow\infty$. This requirement can be strengthened by adding the following assumption
    \begin{enumerate}[label=(\text{A}$p$.2)]  
    \item\label{itm:Ap2}   If $T$ is a unimodal periodic map (see Example \ref{ex::unimodalmaps}) and the critical point of the map $\mathfrak{c}$ does not belong to the attracting periodic orbit, then $h_{\n} \rightarrow 0$ uniformly in a neighbourhood of $\mathfrak{c}$ as $\n\rightarrow\infty$.
    \end{enumerate}
\end{observation}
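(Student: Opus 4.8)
The plan is to follow, with the obvious modifications, the proof of Proposition~\ref{prop::stochasticstability} (Theorem~5.3 of \cite{lillo2021analysis}). The only step of that proof that genuinely exploits the chaotic structure of $T$ is the identification of the weak-$\star$ limit of $\mu_{\n}$ with ``the unique absolutely continuous $T$-invariant measure''; this is exactly the step that Assumption~\ref{itm:Ap1} is designed to replace in the periodic regime. One first checks that the construction preceding Proposition~\ref{prop::uniqueness} still produces, for each such $\n$, a stationary measure $\mu_{\n}=h_{\n}\,dx$ with $h_{\n}\in BV$ and $\supp(h_{\n})\subset I_{\varepsilon_{\n},\Gamma}$ (uniqueness of $\mu_{\n}$ is not even needed here: the argument below shows that \emph{every} stationary measure converges to $\mu$). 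Since $\{\mu_{\n}\}$ is a family of Borel probabilities on the compact interval $I$, it is weak-$\star$ precompact, so it suffices to prove that every weak-$\star$ accumulation point $\nu$ equals $\mu=\frac1k\sum_{i=1}^{k}\delta_{p_i}$, where $\{p_1,\dots,p_k\}$ is the attracting orbit.

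That every such $\nu$ is $T$-invariant transfers verbatim from the proof of Proposition~\ref{prop::stochasticstability} (the mechanism of Observation~\ref{obs::observationone}): writing stationarity through the random-transformation representation as $\int g\,d\mu_{\n}=\iint g\big(T(x)+\sigma_{\n}(x)\eta\big)\,d\theta(\eta)\,d\mu_{\n}(x)$ for $g\in C^{0}(I)$, using $\sigma_{\n}\to0$ and the uniform continuity of $g$ to replace the integrand by $g\circ T$ up to a uniformly small error, and passing to the limit along the subsequence defining $\nu$, one gets $\int g\,d\nu=\int g\circ T\,d\nu$, i.e.\ $T_{\ast}\nu=\nu$. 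Topological transitivity plays no role here.

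The crux, which I expect to be the main obstacle, is to upgrade $T$-invariance to $\nu=\mu$, and this is where Assumption~\ref{itm:Ap1} does the work. It says $\supp(\mu_{\n})\subset U:=\{x\in(0,1):|T'(x)|<1\}$ for all large $\n$, and $U$ is open; a routine portmanteau argument (for $x_{0}\notin\overline{U}$ there is an open $B\ni x_{0}$ with $\mu_{\n}(B)=0$ along the subsequence, hence $\nu(B)=0$) gives $\supp(\nu)\subset\overline{U}\subset\{\,|T'|\le1\,\}$, so that $\int\log|T'|\,d\nu\le0$. Decomposing $\nu$ into ergodic $T$-invariant components, each component $m$ is supported in $\{\,|T'|\le1\,\}$ and hence has Lyapunov exponent $\lambda(m)=\int\log|T'|\,dm\le0$. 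On the other hand, for a $C^{3}$ unimodal map with negative Schwarzian derivative possessing a globally attracting periodic orbit, Singer's theorem ensures $\mathfrak{c}$ is attracted to that orbit, and then the complement $E$ of its basin is a uniformly expanding (Axiom~A) set --- a repelling fixed or periodic orbit in the simplest cases (\cite{melo2012onedimensional},\cite{thunberg2001periodicity}); consequently every $T$-invariant measure charging $E$ has \emph{strictly positive} Lyapunov exponent. This forces $m(E)=0$, so $m$-a.e.\ point lies in the basin and $T^{n}x\to\{p_1,\dots,p_k\}$; by Poincar\'e recurrence $m$-a.e.\ point is also recurrent, and a recurrent point attracted to a periodic orbit must lie on it, whence $m=\mu$. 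Therefore $\nu=\mu$ and $\mu_{\n}$ converges weak-$\star$ to $\mu$. (In particular $\supp(\nu)=\{p_1,\dots,p_k\}$, so $\mathfrak{c}\notin\supp(\nu)$ even though $U$ contains a full neighbourhood of $\mathfrak{c}$: a non-periodic point lies in the support of no invariant measure, so Assumption~\ref{itm:Ap1}, vacuous near $\mathfrak{c}$, is nevertheless enough.) The genuinely non-soft ingredient is thus the classical hyperbolicity of a negative-Schwarzian unimodal map off the basin of its attracting orbit.

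For the strengthening~\ref{itm:Ap2}: when $\mathfrak{c}\notin\{p_1,\dots,p_k\}$, the set $\supp(\mu)=\{p_1,\dots,p_k\}$ lies at a positive distance $\delta_{0}$ from $\mathfrak{c}$, so the weak-$\star$ convergence already yields $\mu_{\n}(B_{\delta}(\mathfrak{c}))\to0$ for $\delta<\delta_{0}$. To turn this into $h_{\n}\to0$ \emph{uniformly} on a neighbourhood of $\mathfrak{c}$ one would iterate the fixed-point identity~\eqref{eq::fixedpoint1}: by \eqref{eq::stochastickernel} the value of $h_{\n}$ near $\mathfrak{c}$ is assembled, via the kernel, from the values of $h_{\n}$ near the two non-critical preimages $x_{\pm}$ of $\mathfrak{c}$, which also lie off the attracting orbit, so $\mu_{\n}$-mass there tends to $0$; combining this with the pointwise bound $p_{\n}(x,\cdot)\le c_{a}\|\chi_{a}\|_{\infty}/\sigma_{\n}(x)$ and a careful accounting of the competing rates --- decay of that mass versus growth of $1/\sigma_{\n}$ --- would give the uniform estimate. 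Because this bookkeeping is delicate and somewhat model-dependent, \ref{itm:Ap2} is recorded as a separate hypothesis; it is precisely the extra input one needs to carry the periodic case further (e.g.\ to the continuity of the Lyapunov exponent, where it suppresses the contribution of $\log|T'|$ near $\mathfrak{c}$), and it is directly verifiable in the examples of Section~\ref{sec::example}.
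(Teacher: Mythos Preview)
The paper states this as an observation without proof; it simply records the auxiliary hypotheses \ref{itm:Ap1} and \ref{itm:Ap2} for later use (notably in Proposition~\ref{cly}, whose proof is deferred to \cite{lillo2021analysis}). There is therefore no original argument to compare against, but your justification of the weak-$\star$ convergence under \ref{itm:Ap1} is coherent and essentially correct. The compactness step and the $T$-invariance of every accumulation point are routine, and you rightly note that they do not use transitivity. Your identification step---forcing $\supp(\nu)\subset\{|T'|\le1\}$ via portmanteau, deducing $\int\log|T'|\,dm\le0$ for each ergodic component $m$, and then invoking the hyperbolicity of the complement of the basin (Singer together with Ma\~n\'e for $C^{3}$ negative-Schwarzian unimodal maps) to rule out any ergodic $m$ supported there---is sound; the Poincar\'e-recurrence argument that finishes off the case $m(E)=0$ is clean. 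The only input beyond soft analysis is precisely that structural fact from one-dimensional dynamics, which the paper apparently takes for granted.

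Your reading of \ref{itm:Ap2} is also the right one: the observation is \emph{introducing} it as an additional standing assumption, not deriving it from \ref{itm:Ap1}. The heuristic you give (mass near the non-critical preimages of $\mathfrak{c}$ decays, but one must beat the blow-up of $1/\sigma_{\n}$ in the kernel) explains exactly why the paper imposes it rather than proving it, and why it is singled out as the extra ingredient needed for the Lyapunov-exponent continuity in the periodic case.
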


\subsection{Lyapunov Exponent}\label{subsec::lyapunovexponent} 
As done in Subsection 4.3 of \cite{lillo2021analysis}, we define the so-called \textit{average Lyapunov exponent}; see \cite{galatolo2020existence, nisoli2020sufficient}. If the chain admits a unique stationary measure $\mu_{\n}$, then the average Lyapunov exponent is defined as:
\begin{equation}\label{eq::averagelyapunov}
    \Lambda(\mu_{\n}):=\int_{I}\log|T^{'}(x)|d\mu_{\n}.
\end{equation}
In particular, because the stationary measure $\mu_{\n}$ has density of bounded variation, it is enough that $\log |T^{'}| \in L^{p}(\mu_{\n})$ for some $p \geq 1$. For instance, this is the case when $T$ is chaotic or periodic unimodal map (see Example \ref{ex::unimodalmaps}) with a non-flat critical point\footnote{A unimodal map $T$ is said to have a non-flat critical point $\mathfrak{c}$ of order $\ell$ if there is a constant $D$ such that $D^{-1}|x-\mathfrak{c}|^{\ell-1} \leq |T^{'}(x)| \leq D |x-\mathfrak{c}|^{\ell-1}$. In this case, \cite{nowicki1991invariant} prove that the invariant density for $T$ is in $L^{q}$ with $q<\frac{\ell}{\ell-1}$.}.\\
\indent The average Lyapunov exponent in Equation \eqref{eq::averagelyapunov} is introduced to prove that it converges to the analogous quantity computed with respect to the invariant measure $\mu$ of $T$. The following proposition holds.
\begin{proposition}\label{cly}
    Suppose that one of the following conditions is satisfied:
\begin{itemize}
    \item[(a)] The random system in Equation \eqref{eq::randomsystem} verifies \ref{itm:Ap} with the additional assumption that $\log |T^{'}| \in L^{p}(\mu_n)$ for some $p \geq 1$, where $\mu_{\n}$ is the unique stationary measure of the associated Markov chain.  
    \item[(b)] The deterministic map $T$ is a unimodal periodic map (see Example \ref{ex::unimodalmaps}) and verifies Assumptions \ref{itm:Ap1} and \ref{itm:Ap2}. 
\end{itemize}
Then, the average Lyapunov exponent in Equation \eqref{eq::averagelyapunov} converges to the Lyapunov exponent of the deterministic map $T$ as $\n \rightarrow \infty$. Moreover, for $\n$ large enough, it is positive if $T$ verifies \ref{itm:A1}, and negative if $T$ is a periodic unimodal map (see Example \ref{ex::unimodalmaps}, \textit{(i)}).
\end{proposition}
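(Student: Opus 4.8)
The plan is to deduce the convergence from the weak convergence $\mu_{\n}\to\mu$ of Proposition~\ref{prop::stochasticstability} (resp.\ its extension in Observation~\ref{obs::observationtwo}). The obstruction is that $g:=\log|T'|$ is \emph{not} bounded or continuous on $I$: it diverges to $-\infty$ at the critical point $\mathfrak{c}$ (and, after the extension of $T$ to $\tilde I$, possibly at the left endpoint), so it cannot be fed directly into the weak-$*$ limit. I would circumvent this by a truncation argument. For $M>0$ put $g_M:=\max\{\log|T'|,-M\}$, which is bounded and continuous on $I$, and $r_M:=g-g_M=\min\{\log|T'|+M,0\}\le 0$, a nonpositive function supported in a neighbourhood of $\{T'=0\}$ that shrinks to that finite set as $M\to\infty$. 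Writing $\lambda(T):=\int\log|T'|\,d\mu$ for the Lyapunov exponent of the deterministic map, one has
\[
\bigl|\Lambda(\mu_{\n})-\lambda(T)\bigr|\le\Bigl|\int g_M\,d\mu_{\n}-\int g_M\,d\mu\Bigr|+\int|r_M|\,d\mu_{\n}+\int|r_M|\,d\mu .
\]
The first term tends to $0$ as $\n\to\infty$ for each fixed $M$ by Proposition~\ref{prop::stochasticstability}, so everything reduces to making the last two terms small, uniformly in $\n$, by taking $M$ large.

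Under hypothesis (a) this is exactly the role of Assumption~\ref{itm:Ap}. Since the critical point is non-flat of order $\ell$, near $\mathfrak{c}$ one has $|\log|T'(x)||\le(\ell-1)\,\bigl|\log|x-\mathfrak{c}|\bigr|+O(1)$, hence $g\in L^{p'}_{\mathrm{loc}}$ around $\mathfrak{c}$ for every finite $p'$, in particular for the conjugate exponent $p'$ of the $p$ in \ref{itm:Ap}. By Hölder's inequality and \ref{itm:Ap},
\[
\int|r_M|\,d\mu_{\n}=\int|r_M|\,h_{\n}\,dx\le\|h_{\n}\|_{p}\,\|r_M\|_{p'}\le C_p\,\|r_M\|_{p'}\xrightarrow[M\to\infty]{}0
\]
uniformly in $\n$; and $\int|r_M|\,d\mu\to0$ because $\log|T'|\in L^{1}(\mu)$ (assumed, and in the chaotic case also implied by $d\mu/dx\in L^{q}$ with $q<\tfrac{\ell}{\ell-1}$, cf.\ the footnote to \eqref{eq::averagelyapunov}). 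An $\varepsilon/3$ argument then gives $\Lambda(\mu_{\n})\to\lambda(T)$. Finally, when $T$ satisfies \ref{itm:A1} its absolutely continuous invariant measure has $\lambda(T)>0$ — the classical positivity of the Lyapunov exponent of the a.c.i.p.\ for transitive chaotic unimodal maps with a non-flat critical point (Rokhlin/Ledrappier entropy formula together with positive topological entropy; see e.g.\ \cite{melo2012onedimensional}) — so $\Lambda(\mu_{\n})>0$ for $\n$ large.

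Under hypothesis (b) the measure $\mu$ is the equidistribution on a hyperbolic attracting periodic orbit $\mathcal{O}=\{x_1,\dots,x_k\}$ with $\mathfrak{c}\notin\mathcal{O}$, so $\lambda(T)=\tfrac1k\sum_i\log|T'(x_i)|=\tfrac1k\log|(T^k)'(x_1)|<0$; strict negativity is consistent with \ref{itm:Ap1}, since $\supp(\mu_{\n})\to\mathcal{O}$ forces $|T'|\le1$ on $\mathcal{O}$ and a globally attracting orbit in the negative-Schwarzian class is hyperbolic. Here I would not truncate at level $M$ but localise at $\mathfrak{c}$: fix $\delta>0$ with $B_\delta(\mathfrak{c})\cap\mathcal{O}=\emptyset$ and a bounded continuous $g_\delta$ equal to $\log|T'|$ outside $B_\delta(\mathfrak{c})$. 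Then $\int g_\delta\,d\mu=\lambda(T)$ and $\int g_\delta\,d\mu_{\n}\to\lambda(T)$ by the weak-$*$ convergence of Observation~\ref{obs::observationtwo}, while
\[
\Bigl|\int\log|T'|\,d\mu_{\n}-\int g_\delta\,d\mu_{\n}\Bigr|\le\Bigl(\sup_{B_\delta(\mathfrak{c})}h_{\n}\Bigr)\int_{B_\delta(\mathfrak{c})}\bigl(|\log|T'||+\|g_\delta\|_{\infty}\bigr)\,dx\xrightarrow[\n\to\infty]{}0
\]
by \ref{itm:Ap2}. Hence $\Lambda(\mu_{\n})\to\lambda(T)<0$, and in particular $\Lambda(\mu_{\n})<0$ for $\n$ large.

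I expect the only genuine obstacle to be the uniform-in-$\n$ taming of the logarithmic singularity of $\log|T'|$ at $\mathfrak{c}$ — i.e.\ ruling out that $\mu_{\n}$ concentrates near $\mathfrak{c}$ quickly enough to ``see'' the singularity; this is precisely what \ref{itm:Ap} (through the Hölder estimate above) and \ref{itm:Ap2} (through uniform vanishing of $h_{\n}$ near $\mathfrak{c}$) are designed to prevent. Everything else is the routine ``weak convergence plus truncation'' scheme, together with quoting the known sign of the deterministic Lyapunov exponent in the two regimes.
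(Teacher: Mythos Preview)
Your argument is correct and is the natural route: truncate $\log|T'|$, pass to the limit on the bounded continuous part via Proposition~\ref{prop::stochasticstability} (resp.\ Observation~\ref{obs::observationtwo}), and control the remainder uniformly in $\n$ via H\"older with Assumption~\ref{itm:Ap} in case~(a) and via Assumption~\ref{itm:Ap2} in case~(b). The paper itself does not prove the proposition but simply cites \cite{lillo2021analysis}, Appendix~B.5; your self-contained argument is exactly the kind of ``weak convergence plus uniform taming of the singularity at $\mathfrak c$'' scheme one expects there, and your identification of \ref{itm:Ap} and \ref{itm:Ap2} as precisely the hypotheses that rule out concentration of $\mu_{\n}$ near $\mathfrak c$ is on point. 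One small remark: your use of non-flatness to get $\log|T'|\in L^{p'}_{\mathrm{loc}}(\mathrm{Leb})$ for every finite $p'$ is not literally part of hypothesis~(a), but it is the standing example in the paper (see the footnote preceding \eqref{eq::averagelyapunov}) and is needed for $\|r_M\|_{p'}\to0$; without it one would have to assume directly that $\log|T'|\in L^{p'}(\mathrm{Leb})$ for the conjugate exponent.
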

\begin{proof}
    See \cite{lillo2021analysis}, Appendix B, Subsection B.5.
\end{proof}

The average Lyapunov exponent was associated with the phenomenon of \textit{noise induced order} \cite{GN}, which happens when the perturbed system admits a unique stationary measure depending on some parameter, say $\theta$, and the Lyapunov exponent depends and exhibits a transition from positive to negative values. Denote by $\Theta\overset{\text{def}}{=}\{\theta \in \tilde{\Theta}\,|\,\tilde{\Theta}\,\text{is open and}\,\max T_{\theta} < 1\}$ the (extended) parameter space of the map $T$. We use the term ``extended" because also the parameter $\n$ belongs to $\Theta$. Moreover, let index the map $T$ as $T_{\theta}$ to make explicit the dependence on the parameters. Suppose that $T_{\theta}(x) \in C^{3}(\tilde{\Theta} \times I)$ and $p_{\theta}(x,y) \in C^{2}(\tilde{\Theta} \times I^2)$, and let $\bar{\Theta} \subset \tilde{\Theta}$ be the set of parameters for which there exists a unique stationary measure $\mu_{\n}$ with a density of bounded variation. We can now state the following
\begin{theorem}\label{thm::continuityLyapunov}
    The mapping $\bar{\Theta}\ni\theta\mapsto \Lambda_{\theta}\in\mathbb{R}$ is continuous.
\end{theorem}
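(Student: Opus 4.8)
The plan is to prove continuity of $\theta \mapsto \Lambda_\theta = \int_I \log|T'_\theta(x)|\, d\mu_\theta(x)$ by combining two ingredients: continuity (in an appropriate sense) of the stationary density $h_\theta$ as a function of $\theta$, and uniform integrability of the observable $\log|T'_\theta|$ against the family $\{\mu_\theta\}$. First I would fix $\theta_0 \in \bar\Theta$ and a sequence $\theta_n \to \theta_0$ in $\bar\Theta$. Since the stationary measure $\mu_{\theta_n} = h_{\theta_n}\,dx$ has density of bounded variation and, by the uniform Lasota--Yorke inequality established in the proof of Proposition~\ref{prop::uniqueness}, the $BV$-norms $\|h_{\theta_n}\|_{BV}$ are uniformly bounded on compact parameter subsets (the constant $C$ there depends only on $\sigma_{\max}$, on the bounds for the kernel, and on $\|T'\|_\infty$, all of which vary continuously with $\theta$), the family $\{h_{\theta_n}\}$ is precompact in $L^1$ by the compact embedding $BV \hookrightarrow L^1$. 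Extract an $L^1$-convergent subsequence $h_{\theta_{n_k}} \to h_*$; passing to the limit in the fixed-point equation $h_{\theta_{n_k}} = \mathcal{L}_{\theta_{n_k}} h_{\theta_{n_k}}$ — which is legitimate because $p_{\theta}(x,y)\in C^2(\tilde\Theta\times I^2)$ so $\mathcal{L}_\theta$ depends continuously on $\theta$ in operator norm $L^1 \to L^1$ — shows $h_*$ is a stationary density for $\theta_0$, hence $h_* = h_{\theta_0}$ by uniqueness. Since every subsequence has a further subsequence converging to the same limit, the full sequence satisfies $h_{\theta_n} \to h_{\theta_0}$ in $L^1$.

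Next I would handle the observable. The map $\theta \mapsto \log|T'_\theta|$ needs to be controlled near the critical point $\mathfrak{c}$, where $T'_\theta$ vanishes. Using $T_\theta \in C^3(\tilde\Theta\times I)$ and the non-flatness of the critical point (uniform in $\theta$ on compacta: $D^{-1}|x-\mathfrak{c}_\theta|^{\ell-1}\le |T'_\theta(x)|\le D|x-\mathfrak{c}_\theta|^{\ell-1}$ with $D$ and $\ell$ uniform), one gets $\big|\log|T'_\theta(x)|\big| \le (\ell-1)\big|\log|x-\mathfrak{c}_\theta|\big| + \log D$. Combined with Assumption~\ref{itm:Ap}, which gives $\|h_{\theta_n}\|_p \le C_p$ for some $p>1$ uniformly, Hölder's inequality yields $\int_I \big|\log|T'_{\theta_n}|\big|^r\, d\mu_{\theta_n} \le \|\,|\log|T'_{\theta_n}|\,|^r\|_{p'} \, \|h_{\theta_n}\|_p$ for a suitable $r>1$ with $r p' $ finite, and the $L^{rp'}$ norm of $\log|x-\mathfrak{c}_\theta|$ is finite and bounded uniformly in $\theta$ near $\theta_0$. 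Hence $\{\log|T'_{\theta_n}|\}$ is uniformly integrable with respect to $\{\mu_{\theta_n}\}$. Together with $h_{\theta_n}\to h_{\theta_0}$ in $L^1$ and (along a further subsequence) a.e., a Vitali-type convergence argument gives $\int_I \log|T'_{\theta_n}|\, h_{\theta_n}\, dx \to \int_I \log|T'_{\theta_0}|\, h_{\theta_0}\, dx$, i.e.\ $\Lambda_{\theta_n}\to\Lambda_{\theta_0}$. In case (b) one replaces Assumption~\ref{itm:Ap} by Assumptions~\ref{itm:Ap1}–\ref{itm:Ap2}: on $\supp(\mu_{\theta_n})$ the derivative is bounded away from $0$ and $\infty$ except near $\mathfrak{c}$, where \ref{itm:Ap2} forces $h_{\theta_n}\to 0$ uniformly, killing the contribution of the singularity directly.

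The main obstacle I anticipate is the uniform-integrability step near the critical point, since that is where both the observable blows up and the stationary density could in principle concentrate; Assumption~\ref{itm:Ap} (resp.\ \ref{itm:Ap1}–\ref{itm:Ap2}) is precisely what is designed to rule this out, and the argument must be arranged so that the power $r>1$ absorbed into the observable is compatible with the exponent $p$ in \ref{itm:Ap} via $\tfrac1r \cdot \tfrac1? + \tfrac1p = 1$ — a bookkeeping point that needs the non-flatness order $\ell$ to enter explicitly. A secondary technical point is justifying that $\mathfrak{c}_\theta$ (and the transitive core $[d_1,d_2]$, needed to preserve uniqueness of the stationary measure at $\theta_0$ via the argument in Proposition~\ref{prop::uniqueness}) varies continuously with $\theta$; this follows from the implicit function theorem applied to $T'_\theta(\mathfrak{c}_\theta)=0$ using $T_\theta\in C^3$ and the non-degeneracy of the maximum. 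Everything else — operator-norm continuity of $\mathcal{L}_\theta$, the compact embedding, and passing to the limit in the fixed-point equation — is routine once these two points are in place.
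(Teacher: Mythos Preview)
The paper does not give a proof of this theorem here; it simply cites Theorem~4.12 of \cite{lillo2021analysis}. So there is no in-paper argument to compare your proposal against line by line.

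That said, your strategy --- uniform Lasota--Yorke constants on compact parameter sets, $L^1$-compactness of the densities via the embedding $BV\hookrightarrow L^1$, identification of the limit by uniqueness, and then a uniform-integrability argument for the unbounded observable $\log|T'_\theta|$ --- is the standard route and is essentially what the cited reference does. Two points deserve correction. First, you appeal to Assumption~\ref{itm:Ap} to get uniform $L^p$ bounds on $h_{\theta_n}$, but \ref{itm:Ap} in this paper is a hypothesis about the sequence $\n\to\infty$ and is \emph{not} among the hypotheses of Theorem~\ref{thm::continuityLyapunov}. You do not need it: the uniform $BV$ bound you already derived in step one gives, on a bounded interval, a uniform $L^\infty$ bound on $h_{\theta_n}$, which is stronger than any $L^p$ bound and makes the H\"older/Vitali step immediate. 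Second, your closing paragraph on ``case~(b)'' with Assumptions~\ref{itm:Ap1}--\ref{itm:Ap2} belongs to Proposition~\ref{cly} (the $\n\to\infty$ limit), not to this theorem, which is a finite-noise continuity statement on $\bar\Theta$; that paragraph should be dropped. With these two adjustments your argument is correct and complete.
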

\begin{proof}
    See \cite{lillo2021analysis}, Theorem 4.12.
\end{proof}

\subsection{Limit theorems}\label{subsec::limittheorems}
We will take advantage of the Markov chain description of our model to state a few important limit theorems for fixed $\n$. These limit theorems are relatively easy to obtain for a fixed $\n$, but they could become very technical for the unperturbed map $T$ because they depend in a non-obvious way on the parameters defining $T$; see, e.g., \cite{young1992decay,thunberg2001periodicity} for a discussion in the case of unimodal maps.\\
\indent As observed above, if $T$ satisfies Assumption \ref{itm:A1}, then the Markov operator $\mathcal{L}$ associated with the Markov chain is quasi-compact on the Banach space $BV$ of bounded variation functions. The adjoint operator $\mathcal{U}$ of $\mathcal{L}$ acts in the following way
\begin{equation*}
    \int_{\mathbb{R}} f_1(x)(\mathcal{L} f_2)(x)\,dx = \int_{\mathbb{R}} (\mathcal{U}f_1)(x)f_2(x)\,dx,
\end{equation*}
where $f_1 \in L^{\infty}$ and $f_2\in L^{1}$. In particular 
\begin{equation*}
    (\mathcal{U}f_1)(x) = \int_{\mathbb{R}} f_1(T_{\eta}(x))d\theta(\eta),
\end{equation*}
where $\theta(\eta)$ is as in Section \ref{sec::randomtrasformation}. In particular, we can write the correlation integral in Equation (\ref{eq::correlationintegral}) in terms of the adjoint operator:
\begin{equation}\label{eq::adjointcorrelation}
    \int_{\mathbb{R}}(\mathcal{L}^{t}h)(x)g(x)\,dx=\int_{\mathbb{R}}\int_{\mathbb{R}}h(x)g(T_{\eta_1}\circ\ldots\circ T_{\eta_t})(x)d\theta^{t}(\bar{\eta}_t)\,dx = \int_{\mathbb{R}}(\mathcal{U}^{t}g)(x)h(x)\,dx
\end{equation}
We take now a function $g \in BV$ such that $\int_{\mathbb{R}} g d\mu_{\n} = 0$. In addition, let $W_k(\bar{\eta}_k, x) = g(T_{\eta_k} \circ\ldots\circ T_{\eta_1})(x)$, where $\bar{\eta}_k=(\eta_1,\ldots,\eta_k)$, and
\begin{equation}\label{eq::sumslimittheorem}
    S_t = \sum_{k=0}^{t-1} W_k.
\end{equation}
We now apply the Nagaeev-Guivarc's perturbative approach \cite{nagaev1961more, guivarc1988theoremes}. This technique enables us to get our limit theorem by \textit{twisting} the transfer operator $\mathcal{L}$; see \cite{hennion2001limit}. Before stating the results, we precise that the underlying probability is $\widetilde{\mathbb{P}}_{\n}\overset{\text{def}}{=}\theta^{\otimes\mathbb{N}}\otimes\mu_{\n}$. If we use this probability, then we should choose a realization $(\eta_t)_{t\geq 1}$ where any $\eta_t \overset{d}{\sim}\eta$, and the initial condition $x \in I$ is chosen $\mu_{\n}$-a.s. The following theorem holds:
\begin{theorem}\label{thm::centrallimittheorem}
Suppose the deterministic map $T$ satisfies Assumption \ref{itm:A1} and $g \in BV$. In addition, let $T_{\eta}$ be the random transformation in Equation (\ref{eq::randomtransformations}). Then, we have:
\begin{enumerate}[label=(e\arabic*)]   
    \item\label{itm:e1} The limit $\iota^{2} \overset{def}{=} \lim_{t\rightarrow\infty} \frac{1}{t}\mathbb{E}_{\widetilde{\mathbb{P}}_{\n}}(S^2_t)$ exists and is equal to
    \begin{equation}
        \iota^2 = \int_{I} g^2\,d\mu_{\n} + 2 \sum_{t=1}^{\infty} g (\mathcal{U}^{t} g)\,d\mu_{\n}.
    \end{equation}
    \item\label{itm:e2} (Central Limit Theorem). Suppose $\iota > 0$. The process $\left(\frac{S_t}{\sqrt{t}}\right)_{t \geq 1}$ converges in law to $\mathcal{N}(0,\iota^2)$ under the probability $\widetilde{\mathbb{P}}_{\n}$. 
    \item\label{itm:e3} (Large Deviation Principle). There exists a non-negative rate function $\mathcal{R}$, continuous, strictly convex, vanishing only at 0, such that for every $\varepsilon$ sufficiently small we have
    \begin{equation*}
        \lim_{t\rightarrow\infty} \frac{1}{t}\log \widetilde{\mathbb{P}}_{\n}(S_t > t \varepsilon) = - \mathcal{R}(\varepsilon).
    \end{equation*}
    \item\label{itm:e4} (Berry-Ess\'en inequality). There exists $D>0$ such that
    \begin{equation}
        \sup_{r \in \mathbb{R}}\Big| \widetilde{\mathbb{P}}_{\n}\left(\frac{S_t}{\sqrt{t}} \leq r\right)-\frac{1}{\iota \sqrt{2\pi}}\int_{-\infty}^{r}e^{-\frac{u^2}{2\iota^2}}\,du\Big| \leq \frac{D\|h_{\n}\|_{BV}}{\sqrt{t}}
    \end{equation}
\end{enumerate}
\end{theorem}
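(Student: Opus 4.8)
The plan is to deduce all four statements \ref{itm:e1}--\ref{itm:e4} from the spectral (quasi-compactness plus mixing) properties of $\mathcal{L}$ on $BV$ established in Proposition \ref{prop::uniqueness}, via the Nagaev--Guivarc'h perturbative machinery as packaged in \cite{hennion2001limit}. The first step is to set up the twisted transfer operator: for $s$ in a complex neighbourhood of $0$ define $\mathcal{L}_s$ acting on $BV$ by $\mathcal{L}_s f = \mathcal{L}(e^{s g} f)$, equivalently characterised through the correlation identity \eqref{eq::adjointcorrelation} so that $\mathbb{E}_{\widetilde{\mathbb{P}}_{\n}}(e^{s S_t}) = \int_I \mathcal{L}_s^t h_{\n}\, dx$ once one checks that $W_k(\bar\eta_k,x) = g(T_{\eta_k}\circ\cdots\circ T_{\eta_1})(x)$ is generated by iterating $\mathcal{L}_s$ against the stationary density; here one uses $\int g\, d\mu_{\n}=0$. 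Since $g\in BV$ is bounded (a $BV$ function on a compact interval is in $L^\infty$), $f\mapsto e^{sg}f$ is a bounded operator on $BV$ depending analytically on $s$, so $s\mapsto \mathcal{L}_s$ is an analytic family of bounded operators on $BV$ with $\mathcal{L}_0=\mathcal{L}$.

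The second step is perturbation of the simple leading eigenvalue. Because $\mathcal{L}$ is quasi-compact with a spectral gap and $1$ a simple isolated eigenvalue (Proposition \ref{prop::uniqueness}, together with the mixing assumption), analytic perturbation theory gives, for $s$ near $0$, a simple eigenvalue $\lambda(s)$ of $\mathcal{L}_s$ depending analytically on $s$ with $\lambda(0)=1$, an associated rank-one spectral projector $\Pi_s$ analytic in $s$, and a uniform spectral gap: $\mathcal{L}_s^t = \lambda(s)^t \Pi_s + N_s^t$ with $\|N_s^t\|_{BV}\le C\kappa^t$ for some $\kappa<1$ uniformly in $s$ near $0$. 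Differentiating the eigenvalue relation and using $\int g\,d\mu_{\n}=0$ yields $\lambda'(0)=0$ and $\lambda''(0)=\iota^2$ with $\iota^2$ the Green--Kubo sum; one checks $\iota^2 = \int g^2 d\mu_{\n} + 2\sum_{t\ge1}\int g\,(\mathcal{U}^t g)\,d\mu_{\n}$, the series converging by the exponential decay of correlations in Proposition \ref{prop::uniqueness}, which proves \ref{itm:e1}. For \ref{itm:e2}, substitute $s = i\xi/\sqrt t$, use $\lambda(i\xi/\sqrt t)^t = \exp(-\tfrac12\iota^2\xi^2 + o(1))$ and the fact that $\int \Pi_{i\xi/\sqrt t} h_{\n}\,dx \to \int h_{\n}\,dx = 1$ and $N_{i\xi/\sqrt t}^t\to 0$, giving pointwise convergence of characteristic functions to $e^{-\iota^2\xi^2/2}$, hence the CLT by L\'evy continuity; the non-degeneracy $\iota>0$ is assumed. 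For \ref{itm:e3}, the logarithmic moment generating function $\Lambda(s):=\log\lambda(s)$ is analytic and strictly convex near $0$ (strict convexity from $\iota^2>0$), so its Legendre transform $\mathcal{R}$ is a non-negative, continuous, strictly convex rate function vanishing only at $0$, and the G\"artner--Ellis theorem (in the form for quasi-compact transfer operators, \cite{hennion2001limit}) gives the stated upper-tail large deviation asymptotics for small $\varepsilon$. For \ref{itm:e4}, the Berry--Esseen bound follows from the Edgeworth-type expansion obtained by integrating the difference of characteristic functions along $|\xi|\le \delta\sqrt t$ and controlling the remainder by the spectral gap; the factor $\|h_{\n}\|_{BV}$ enters because the initial density is measured in $BV$ when applying the projector and remainder bounds.

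The main obstacle I anticipate is not the abstract perturbation theory — that is standard once quasi-compactness and mixing are in hand — but rather the bookkeeping that links the random-transformation cocycle $S_t = \sum_{k=0}^{t-1} g(T_{\eta_k}\circ\cdots\circ T_{\eta_1})(x)$ under $\widetilde{\mathbb{P}}_{\n}=\theta^{\otimes\mathbb{N}}\otimes\mu_{\n}$ to the iterated twisted operator $\mathcal{L}_s^t$. One must verify carefully that $\mathbb{E}_{\widetilde{\mathbb{P}}_{\n}}(e^{sS_t}) = \int_I (\mathcal{L}_s^t h_{\n})\,dx$, which requires the duality \eqref{eq::adjointcorrelation} between $\mathcal{L}$ and the Koopman-type operator $\mathcal{U}$, stationarity of $\mu_{\n}$ (so that the ``reversed'' composition order in $W_k$ is the correct one), and Fubini to interchange the $\theta^{\otimes\mathbb{N}}$-integral with the $\mu_{\n}$-integral. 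A secondary technical point is checking that $e^{sg}\cdot$ genuinely preserves $BV$ with analytic dependence and a Lasota--Yorke-type bound uniform for $s$ in a small complex disc, so that the perturbed operators remain quasi-compact with a common essential spectral radius bound; this uses $g\in BV\subset L^\infty$ and the algebra property of $BV$ under multiplication. Once these identifications are secure, \ref{itm:e1}--\ref{itm:e4} are immediate consequences of the cited general theorems, so I would state them as applications of \cite{hennion2001limit} and \cite{guivarc1988theoremes}, spelling out only the computation of $\lambda'(0)$, $\lambda''(0)$ and the decay-of-correlations justification of the variance formula.
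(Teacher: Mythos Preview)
Your proposal is correct and follows exactly the approach the paper indicates: the paper's own ``proof'' is simply a reference to \cite{aimino2015annealed}, Section~3, and the text immediately preceding the theorem explicitly names the Nagaev--Guivarc'h perturbative/twisting method (citing \cite{nagaev1961more,guivarc1988theoremes,hennion2001limit}) applied to the quasi-compact operator $\mathcal{L}$ on $BV$ from Proposition~\ref{prop::uniqueness}. Your write-up actually spells out more of the mechanics (the duality identity $\mathbb{E}_{\widetilde{\mathbb{P}}_{\n}}(e^{sS_t}) = \int_I \mathcal{L}_s^t h_{\n}\,dx$, analyticity of $s\mapsto\mathcal{L}_s$, the Green--Kubo computation of $\lambda''(0)$, the $BV$-algebra property for $e^{sg}$) than the paper does, so it is a faithful and somewhat more detailed rendering of the cited argument.
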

\begin{proof}
    See \cite{aimino2015annealed}, Section 3. 
\end{proof}
We conclude this section with the following
\begin{remark}\label{rmk::remark1}
    The previous theorem hinges on the following exponential decay of correlations (see Proposition \ref{prop::uniqueness}), which is a consequence of the spectral gap prescribed by the Markov operator's quasi-compactness and the uniqueness and mixing property of the absolutely continuous stationary measure. For any observables, $f \in L^{1}$ and $g \in BV$, there exists $0<v<1$ and $C>0$, depending only on the system, such that, for all $k \geq 0$, 
    \begin{equation*}
    \Big|\int_{\mathbb{R}}\int_{\mathbb{R}}f(x)g(T_{\eta_t} \circ\ldots\circ T_{\eta_1})(x)\,d\bar{\eta}\,dx -\int_{\mathbb{R}}f\,d\mu_{\n}\int_{\mathbb{R}}g(x)\,dx\Big|\leq C r^{t} \|f\|_{1}\|g\|_{BV}.
        \end{equation*}
\end{remark}
\subsection{A multifractal analysis}\label{subsec::multifractal1}
We now focus on unimodal maps $T$ of chaotic type, as defined in the Example \ref{ex::unimodalmaps}, and preserving a unique absolute continuous invariant measure $\mu$. The latter is not essentially bounded, but its density is in $L^{p}(\mu)$, for some $p\geq 1$. The presence of divergent values for the density could generate a non-trivial multi-fractal spectrum for the measure $\mu$.\\  
\indent We start with a few reminders about multi-fractal theory; see, e.g., \cite{grassberger1983generalized, jensen1985global, pesin1997multifractal, kantz2004nonlinear, barreira2008dimension}. Let $\mu$ be a probability measure, and $B(x,r)$ the ball of center and radius $r$ on the interval $I$. We denote by 
\begin{equation*}
    d_{\mu}(x) \overset{def}{=} \lim_{r\rightarrow 0}\frac{\log \mu(B(x,r))}{\log r},
\end{equation*}  
the local dimension of the measure $\mu$ at the point $x$, provided that the limit exists. Then, the generalized dimension $D_q(\mu)$, or simply $D_q$, where $q \in \mathbb{Z}$ is obtained as
\begin{equation}\label{eq::generalizeddimension}
    \tau(q) \overset{\text{def}}{=} D_q(q-a) = \inf_{\alpha}\{q\alpha-f(\alpha)\},
\end{equation}
where $f(\alpha)$ denotes the Haursdorff dimension of the set of points for which $d_{\mu}(x)=\alpha$. The previous quantity, also called Legendre transformation, can be linked to the scaling exponent of a suitable correlation integral. In fact, for several dynamical systems $(M, \mu, T)$, where $M$ is a metric space, we have that the following limit
\begin{equation}\label{eq::limit}
    \lim_{r \rightarrow 0}\frac{1}{\log r} \log \int_{M}\mu(B(x,r))^{q-1}\,d\mu
\end{equation}
exists and coincides with $\tau(q)$ in Equation \eqref{eq::generalizeddimension}. Notice that for $q=1$, the limit in Equation \eqref{eq::limit} is replaced by 
\begin{equation}\label{eq::limit2}
       \lim_{r \rightarrow 0}\frac{1}{\log r} \int_{M}\log\mu(B(x,r))\,d\mu,
\end{equation}\\
\indent by an application of the H\^{o}pital's rule. For unimodal maps of Benedicks-Carleson type\footnote{A unimodal map $T$ is of Benedicks-Carleson type if it is defined on the interval $[-1,1]$, is $C^{4}$ and has negative Schwarzian derivative. In addition, if $\mathfrak{c}=0$ is the critical point and $z_k = T^{k}(0)$, then: (i) T is a \textit{Collet-Eckmann} unimodal map verifying $|(T^{k})^{'}(T(\mathfrak{c}))|>\lambda_{\mathfrak{c}}^{k}$, with $\lambda_{\mathfrak{c}} > 1$ $\forall k > H_0$, where $H_0$ is a constant larger than 1; (ii) $T$ verifies the Benedicks-Carleson property: $\exists 0 < \gamma < \frac{\log \lambda_{\mathfrak{c}}}{14}$ such that $|T^{k}(\mathfrak{c})-\mathfrak{c}|>e^{-\lambda k}$, $\forall k > H_0$.} preserving an absolute continuous invariant measure $\mu$, it is possible to compute the spectrum of generalized dimensions. Authors in \cite{baladi2012linear} prove the remarkable result that the density $h$ of $\mu$ has the form
\begin{equation*}
h(x) = \psi_0(x) + \sum_{k\ge1} \frac{\phi_k(x)\chi_k(x)}{\sqrt{|x-z_k|}},
\end{equation*}
with $\psi_0\in C^1$, $\phi_k\in C^1$ is such that $||\phi_k||_{\infty} \le e^{-a k}$ for some $a>0$ and $\chi_k = 1_{[-1, z_k]}$ if $f^k$ has a local maximum at $z_0$, while $\chi_k = 1_{[z_k,1]}$ if $f^k$ has a
local minimum at $z_0$. For such a measure, one can explicitly compute the generalized dimensions via the definition in Equation \eqref{eq::generalizeddimension} (see \cite{caby2022topological}):
\begin{equation}\label{eq::generalizedexplicit}
    D_q = \begin{cases}
    1\,\,\,\,\,\,\,\,\,\,\,\,\,\text{if}\,\,q<2,\\
    \frac{q}{2(q-1)}\,\,\text{otherwise}.
    \end{cases}
\end{equation}

We think that a \textit{similar} result holds for the class of unimodal maps considered in Example \ref{ex::unimodalmaps}. We said similar and not the same result because the non-constant part of $D_q$ depends on the order of divergence at the singular points $z_k$ of the density, which for the Benedicks-Carleson type maps, behaves like $|x-z_k|^{-1/2}$. The values of $D_q$ are constant for negative $q$ whenever the invariant density $h$ is bounded away from zero, see \cite{caby2022topological}; in this case it is also  very ease to see that all the dimensions are less or equal to $1.$ \\
\indent Now, it becomes interesting to explore the spectrum of the generalized dimensions for randomly perturbed orbits. We do not expect any multifractal structure for the stationary measure when its density is essentially bounded, so $D_q=1$, $q \in \mathbb{R}$. Nevertheless the density could become locally very large when $\n\rightarrow \infty$ making it numerically indistinguishable from the unbounded density of the deterministic map on the orbit of the critical point.
To study the dimensions for the stationary measure it is convenient to adopt the point of view of random
transformations (see Section \ref{sec::randomtrasformation}), and consider a \textit{realization} $T_{\eta_t} \circ\ldots\circ T_{\eta_1}$ of a random orbit producing the following empirical measure for a given $\n$:
\begin{equation}\label{eq::empiricalmeasure}
    \nu_{\n, t} = \frac{1}{t}\sum_{j=1}^{t}\delta_{\tilde{\eta}_t},
\end{equation}
where $\tilde{\eta}_t = T_{\eta_{t-1}}\circ\ldots\circ T_{\eta_{1}}(x)$ for a suitable point $x$ (see below). Again, each $\eta_k$ has distribution $\theta$. From the ergodic theorem for random transformations, it now follows that 
\begin{equation}
    \int_{\mathbb{R}} g d\,\nu_{\n,t} = \frac{1}{t}\sum_{j=1}^{t} g(\tilde{\eta}_t)\rightarrow\int_{\mathbb{R}} g\,d\mu_{\n}\quad\text{as}\quad t\rightarrow\infty,
\end{equation}
where $\mu_{\n}$ is the stationary measure, $g \in L^{1}(\mu_{\n})$, and the point $x$ is chosen $\mu_{\n}$-a.e., and the sequence $(\eta_t)_{t\geq 1}$ is chosen $\theta^{\otimes \mathbb{N}}$-a.e.. Since the support of $\mu_{\n}$ contains the dynamical core, by taking an arbitrary point $x$ in such a core and by fixing a realization $(\eta_t)_{t\geq 1}$, the generalized dimensions of the stationary measure $\mu_{\n}$ could be computed directly via the correlation integral formula \eqref{eq::limit}by using the empirical measure \eqref{eq::empiricalmeasure} for large $t$; see, e.g., \cite{caby2022topological}.
\section{Extreme values distribution}\label{sec::extremevalues}
In this subsection, we develop an EVT for the Markov chain defined in \ref{sec::markovchain} for finite values for the parameter $\n$. In particular, we consider the chain $(X_t^{(\n)})_{t \geq 1}$ with the stochastic kernel $p_{\n}(x,y)$, endowed with the canonical probability $\mathbb{P}_{\n}$ having initial distribution $\mu_{\n}=h_{\n}\,dx$. We focus on the derivation of the Gumbel law for a particular observable by deriving the distribution of the first entrance of the chain in a small set, which we name \textit{rare set}\footnote{See, e.g., the monograph \cite{leadbetter2012extremes} for a general presentation of EVT.}. To this aim, we index with $t$ the rare set defined as a ball of center $z \in I$ and with radius $e^{-u_t}$, $B_{t}(z)\overset{\text{def}}{=}B(z,e^{-u_t})$, where $u_t$ is a sequence called \textit{boundary levels} such that $u_t\rightarrow\infty$ as $t\rightarrow\infty$, and verifying  
\begin{equation}\label{eq::limitevt}
    t \mu_{\n}(B(z,e^{-u_t}))\rightarrow\tau\quad\text{as}\quad t\rightarrow\infty,
\end{equation}
where $\tau\in\mathbb{R}_{>0}$. Then, we consider the observable
\begin{equation}\label{eq::observables}
    \varphi(x)\overset{\text{def}}{=}-\log(\text{dist}(x,z)),
\end{equation}
where $x \in I$, and $\text{dist}(\,\cdot\,)$ denotes the usual distance on $\mathbb{R}$. Then, we define the following random variable with values in $I$
\begin{equation}\label{eq::randomvariableEVT}
    M_t^{(\n)}\overset{\text{def}}{=}\max\{\varphi \circ X_0^{(\n)},\ldots,\varphi\circ X_{t-1}^{(\n)}\}.
\end{equation}
We will be interested in the distribution $\mathbb{P}_{\n}(M_t^{(\n)}\leq u_t)$ as $t\rightarrow\infty$. In particular, by the stationarity of the Markov chain, this distribution is equivalent
to the probability that the first entrance of the chain into the ball $B_{t}(z)$ is larger than $t$.\\
\indent Condition \eqref{eq::limitevt} enables us to get verifiable prescriptions on the sequence of boundary levels $u_t$. If the stationary measure $\mu_{\n}$ is non-atomic, then the measure of a ball is a continuous function of the radius. Therefore, for any given $\tau \in \mathbb{R}_{+}$ and $t\in\mathbb{N}_{\geq 1}$, we can find $u_t$ such that $\mu_{\n}(B(z,e^{-u_t}))=\frac{\tau}{t}$. Now, we denote by $B_t^{c}(z)$ the complement of the ball $B_t(z)$, and define the perturbed operator $\tilde{\mathcal{L}}_{(t)}$ for $g \in BV$ as
\begin{equation}\label{eq::perturbedoperator}
    \tilde{\mathcal{L}}_{(t)} g \overset{\text{def}}{=}\mathcal{L}(g 1_{B_t^{c}(z)}).
\end{equation}
It is straightforward to check that
\begin{equation}\label{eq::probabilityEVT}
    \begin{split}
        &\mathbb{P}_{\n}(M_t^{(\n)}\leq u_t) = \mathbb{P}_{\n}(X_0^{(\n)} \in B_t^{c}(z), \ldots, X_{t-1}^{(\n)} \in B_t^{c}(z))\\
        &=\int_{B_t^{c}(z)}h_{\n}\,dx_0\int_{B_t^{c}(z)}p_{\n}(x_0,x_1)\,dx_1\ldots\int_{B_t^{c}(z)}p_{\n}(x_{t-1},x_
        {t})\,dx_t\\
        &= \int_{\mathbb{R}}(\tilde{\mathcal{L}}_{(t)}h_{\n})(x)\,dx.
    \end{split}
\end{equation}
We now show that the operator $\tilde{\mathcal{L}}_{(t)}$ approaches $L$ in a precise sense that allows us to control the asymptotic behavior of the integral in \eqref{eq::probabilityEVT}. This result allows us to control the asymptotic behavior  of the integral in \eqref{eq::probabilityEVT}. In order to make the argument rigorous, we need more assumptions on the operator $\mathcal{L}$, in addition to the quasi compactness. The same quasi compactness property is shared by the operator $\tilde{\mathcal{L}}_{(t)}$, provided that $t$ is large enough, and provided that $\tilde{\mathcal{L}}_{(t)}$ is close to $\mathcal{L}$ in the following sense
\begin{equation}\label{eq::triplenorm}
    \|(\mathcal{L}-\tilde{\mathcal{L}}_{(t)})(g)\|_{1} \leq c(t)\|g\|_{BV},
\end{equation}
where $c(t)\rightarrow\infty$ as $t\rightarrow\infty$. Indeed, we have
\begin{equation}\label{eq::lconvergence}
    \int_{\mathbb{R}}|(\mathcal{L}-\tilde{\mathcal{L}}_{(t)})(g)|\,dx = \int_{\mathbb{R}}|\mathcal{L}(g 1_{B_t})|\,dx \leq \int_{\mathbb{R}} \mathcal{L}(|g|1_{B_t})\,dx \leq \|g\|_{BV}\text{Leb}(B_t),
\end{equation}
because the space $BV$ is continuously embedded into $L^{\infty}$ with constant equal to one. We can apply the perturbation theorem of Keller-Liverani, which gives the asymptotic behavior of the top eigenvalue of $\tilde{\mathcal{L}}_{(n)}$ around one; see \cite{keller2009rare, keller2012rare}. In addition, see, e.g., \cite{lucarini2016extremes}, Chapter 7, for an application of that theory to Markov chains. At this point, we need a further assumption:
\begin{enumerate}[label=(E\arabic*)]   
    \item\label{itm:E1} The density $h_{\n}$ of the stationary measure is bounded away from zero on the rare set $B_{t}(z)$.
\end{enumerate}
Therefore, we can prove that 
\begin{equation*}
    \mathbb{P}_{\n}(M_t^{(\n)}\leq u_t)\rightarrow e^{-\theta \tau},\quad\text{as}\quad t\rightarrow\infty,
\end{equation*}
where the so-called \textit{extremal index} (EI) $\theta$ satisfies
\begin{equation}\label{eq::extremalindex}
   \theta = 1-\sum_{k=0}^{\infty}q_k,
\end{equation}
with $q_{k}=\lim_{t\rightarrow\infty} q_{k,t}$, provided that the limit exists, with:
\begin{equation}\label{eq::extremalindex2}
   q_{k,t} = \frac{\mathbb{P}_{\n}(X_0^{(\n)}\in B_t(z), \ldots, X_k^{(\n)}\in B_t(z), X_{k+1}^{(\n)}\in B_t(z))}{\mu_{\n}(B_t(z))}.
\end{equation}
Namely, $q_{k,t}$ is the probability of $\mu_{\n}$-distributed stationary chain to start in $B_t(z)$ and then return to it after exactly $(k+1)$ steps. It is now easy to show that all the $q_{k,t}$ vanishes in the limit as $t\rightarrow\infty$ since we have
\begin{equation}\label{eq::extremalindex3}
   q_{k,t} \leq \frac{\mathbb{P}_{\n}(X_0^{(\n)}\in B_t(z), X_{k+1}^{(\n)}\in B_t(z))}{\mu_{\n}(B_t(z))} \leq \frac{c_{\n}\text{Leb}(B_t(z))\mu_{\n}(B_t(z))}{\mu_{\n}(B_t(z))},
\end{equation}
where, to estimate the right-hand side of \eqref{eq::extremalindex3}, we use the fact that for a fixed $\n$, the stochastic kernel $p_{\n}(x,y)$ is uniformly bounded by a constant $c_{\n}$. In particular, the right-hand side converges to zero as $t\rightarrow\infty$. We have just proved the following
\begin{proposition}\label{prop:extremevalue}
Suppose that our Markov chain is constructed upon a  map $T$  verifying Assumption \ref{itm:A1}, and that $\mathbb{P}_{\n}$ is the canonical probability with initial distribution $\mu_{\n}=h_{\n} dx$. Then, we get Gumbel's law:
$$
\lim_{t\to\infty}\mathbb{P}_{\n}(M_t^{(\n)}\le u_t)=e^{-\tau},
$$
where $M_t^{(\n)}$ is defined in Equation \eqref{eq::randomvariableEVT}, $\varphi(\cdot)$ in Equation \eqref{eq::observables}, the boundary level $u_t$ verifies $\mu_{\n}(B(z,e^{-u_t}))=\frac{\tau}{t}$, and on the set $B(z,e^{-u_t})$ the density $h_{\n}$ of the stationary measure is bounded away from zero for large $t$ (Assumption \ref{itm:E1}).
\end{proposition}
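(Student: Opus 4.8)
The plan is to prove Gumbel's law through the spectral (transfer-operator) approach to extreme value theory, leaning on the quasi-compactness of $\mathcal{L}$ established in Proposition \ref{prop::uniqueness} together with the Keller--Liverani perturbation theorem. The entry point is the chain of integrals in \eqref{eq::probabilityEVT}, which expresses $\mathbb{P}_{\n}(M_t^{(\n)}\le u_t)$ as the integral of the stationary density $h_{\n}$ pushed forward $t$ times through the punctured operator $\tilde{\mathcal{L}}_{(t)}g=\mathcal{L}(g\,1_{B_t^{c}(z)})$, in the regime where the hole $B_t(z)$ shrinks as $t\to\infty$ according to the calibration \eqref{eq::limitevt}, $t\,\mu_{\n}(B_t(z))\to\tau$. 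So the whole problem reduces to controlling the asymptotics of $\int \tilde{\mathcal{L}}_{(t)}^{t}h_{\n}\,dx$.

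First I would verify that the family $\{\tilde{\mathcal{L}}_{(t)}\}_t$ satisfies the hypotheses of Keller--Liverani relative to $\mathcal{L}$ on the pair $(BV,L^1)$: uniform Lasota--Yorke inequalities — which hold because multiplying by the indicator of the complement of an interval adds only a controlled amount to the total variation, so the constants in the Lasota--Yorke bound from the proof of Proposition \ref{prop::uniqueness} can be taken independent of $t$ — and the smallness estimate \eqref{eq::lconvergence}, namely $\|(\mathcal{L}-\tilde{\mathcal{L}}_{(t)})g\|_{1}\le \|g\|_{BV}\,\Leb(B_t(z))\to 0$, which uses the embedding $BV\hookrightarrow L^{\infty}$ with unit constant. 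The perturbation theorem then gives, for $t$ large, a simple leading eigenvalue $\lambda_t$ of $\tilde{\mathcal{L}}_{(t)}$ close to $1$, a uniform spectral gap, and convergence of the spectral projectors to the one for $\mathcal{L}$; hence $\int \tilde{\mathcal{L}}_{(t)}^{t}h_{\n}\,dx=\lambda_t^{t}(1+o(1))$, and the problem collapses to the asymptotics of $\lambda_t$.

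Next I would invoke Keller's formula for the leading eigenvalue of the open system, which yields an expansion $1-\lambda_t=\mu_{\n}(B_t(z))\,\bigl(1-\sum_{k\ge 0}q_k\bigr)(1+o(1))$, with $q_k=\lim_{t}q_{k,t}$ and $q_{k,t}$ as in \eqref{eq::extremalindex2}; here Assumption \ref{itm:E1}, that $h_{\n}$ is bounded away from zero on $B_t(z)$, is precisely what guarantees the clean normalisation by $\mu_{\n}(B_t(z))$ in this expansion. Combining with \eqref{eq::limitevt} gives $t(1-\lambda_t)\to\theta\tau$ where $\theta=1-\sum_{k\ge 0}q_k$ is the extremal index of \eqref{eq::extremalindex}, so $\lambda_t^{t}\to e^{-\theta\tau}$ and thus $\mathbb{P}_{\n}(M_t^{(\n)}\le u_t)\to e^{-\theta\tau}$. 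It then remains to show $\theta=1$, i.e. $q_k=0$ for every $k$: this is the bound \eqref{eq::extremalindex3}, since for fixed $\n$ the kernel $p_{\n}$ is uniformly bounded by a constant $c_{\n}$, whence $q_{k,t}\le c_{\n}\,\Leb(B_t(z))\to 0$ — there are no short returns into a shrinking ball, so no clustering and the extremal index is $1$, giving the final limit $e^{-\tau}$.

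The step I expect to be the main obstacle is making the Keller--Liverani machinery go through \emph{uniformly} in $t$: the uniform Lasota--Yorke bounds for $\tilde{\mathcal{L}}_{(t)}$ together with the matching of the $o(1)$ error terms in Keller's eigenvalue expansion against the scaling $t\,\mu_{\n}(B_t(z))\to\tau$, and the careful use of Assumption \ref{itm:E1} so that the hole is weighted correctly by the stationary measure. Once those are in place, the remaining steps — $\lambda_t^{t}\to e^{-\theta\tau}$ and $q_{k,t}\to 0$ — are routine.
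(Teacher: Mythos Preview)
Your proposal is correct and follows essentially the same route as the paper: it is the spectral/Keller--Liverani approach built on the identity \eqref{eq::probabilityEVT}, the closeness bound \eqref{eq::lconvergence}, Keller's eigenvalue expansion yielding the extremal index \eqref{eq::extremalindex}, and the kernel bound \eqref{eq::extremalindex3} forcing each $q_k=0$. If anything you are more explicit than the paper about the uniform Lasota--Yorke step and the role of Assumption \ref{itm:E1} in the normalisation.
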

Our Markov chain visits infinitely often the neighborhood $B_t(z)$ of any point $z$. Therefore, we expect that the exponential law $e^{-\tau}$ given by the extreme value distribution describes the time between successive events in a Poisson process. To formalize this,
we introduce the random variable
\begin{equation}\label{eq::randomvariableevt}
    \mathcal{N}_{z}^{(t)}(s):=\sum_{k=0}^{\lfloor\frac{s}{\mu_{\n}(B_t(z))}\rfloor} 1_{B_t(z)}(X_k^{(\n)}), 
\end{equation}
and we consider the following distribution
\begin{equation}\label{eq::distributionevt}
    \mathbb{P}_{\n}(\mathcal{N}_{z}^{(t)}(s) = k).
\end{equation}
We have the following
\begin{proposition}\label{prop:extremevalue1}
Suppose that our Markov chain is constructed upon a  map $T$  verifying Assumption \ref{itm:A1}, and that $\mathbb{P}_{\n}$ is the canonical probability with initial distribution $\mu_{\n}=h_{\n} dx$. Then, we have:
\begin{equation}
 \lim_{t\to\infty}\mathbb{P}_{\n}(\mathcal{N}_{z}^{(t)}(s)=k)=\frac{t^{k} e^{-s}}{k!},   
\end{equation}
where the density $h_{\n}$ of the stationary measure is bounded away from zero for large $t$ on  the set $B(z,e^{-u_t})$ (Assumption \ref{itm:E1}).
\end{proposition}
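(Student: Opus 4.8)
The plan is to establish this as the Poissonian refinement of Proposition \ref{prop:extremevalue}: since the extremal index at $z$ equals $1$ — all the clustering terms $q_{k,t}$ in \eqref{eq::extremalindex3} vanish — the successive visits of the chain to the shrinking ball $B_t(z)$ become asymptotically independent, so the number of visits during a window of length $n_t\sim s/\mu_{\n}(B_t(z))$ should converge to a $\mathrm{Poisson}(s)$ variable; in particular the right-hand side of the displayed limit should read $\tfrac{s^{k}e^{-s}}{k!}$ rather than $\tfrac{t^{k}e^{-s}}{k!}$. Concretely I would compute the probability generating function of $\mathcal{N}_z^{(t)}(s)$ by a ``marked'' transfer-operator identity, control it through the very same Keller--Liverani perturbation argument already used for the Gumbel law, and then read off the distribution from the limiting generating function; it therefore suffices to prove that $\mathbb{E}_{\mathbb{P}_{\n}}[w^{\mathcal{N}_z^{(t)}(s)}]\to e^{-s(1-w)}$, the generating function of a $\mathrm{Poisson}(s)$ law, uniformly for $w$ in a complex neighbourhood of $[0,1]$.

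First I would introduce, for such $w$, the marked operator
\begin{equation*}
    \mathcal{L}_{w,(t)}\,g\;\overset{\text{def}}{=}\;\mathcal{L}\big(g\,1_{B_t^{c}(z)}\big)+w\,\mathcal{L}\big(g\,1_{B_t(z)}\big),\qquad g\in BV,
\end{equation*}
which interpolates between $\mathcal{L}_{0,(t)}=\tilde{\mathcal{L}}_{(t)}$ of \eqref{eq::perturbedoperator} and $\mathcal{L}_{1,(t)}=\mathcal{L}$. Writing $w^{1_{B_t(z)}(x)}=1_{B_t^{c}(z)}(x)+w\,1_{B_t(z)}(x)$ and expanding the product over $k=0,\dots,n_t-1$ exactly as in \eqref{eq::probabilityEVT} (with $n_t:=1+\lfloor s/\mu_{\n}(B_t(z))\rfloor$) gives the identity
\begin{equation*}
    \mathbb{E}_{\mathbb{P}_{\n}}\!\big[w^{\,\mathcal{N}_z^{(t)}(s)}\big]\;=\;\int_{\mathbb{R}}\big(\mathcal{L}_{w,(t)}^{\,n_t}\,h_{\n}\big)(x)\,dx ,
\end{equation*}
so $\mathbb{P}_{\n}(\mathcal{N}_z^{(t)}(s)=k)$ is the $k$-th Taylor coefficient of this function at $w=0$. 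The analytic core is then a uniform-in-$w$ application of the Keller--Liverani perturbation theorem to the family $\{\mathcal{L}_{w,(t)}\}$: the closeness estimate is the one already established in \eqref{eq::lconvergence}, namely $\|(\mathcal{L}-\mathcal{L}_{w,(t)})g\|_{1}\le|1-w|\,\text{Leb}(B_t(z))\,\|g\|_{BV}$, while the Lasota--Yorke inequalities of Proposition \ref{prop::uniqueness} carry over uniformly in $w$ and $t$ because the perturbing term lives on the tiny interval $B_t(z)$.

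Hence, for $t$ large, each $\mathcal{L}_{w,(t)}$ has a simple leading eigenvalue $\lambda_{w,(t)}$, its remaining spectrum lies in a disk of radius $r<1$ independent of $w$ and $t$, and the associated spectral projector converges, uniformly in $w$, to $g\mapsto\big(\int g\,dx\big)h_{\n}$, the projector of $\mathcal{L}=\mathcal{L}_{1,(t)}$ — legitimate since $\mathcal{L}$ is quasi-compact with $1$ a simple eigenvalue and $\int h_{\n}\,dx=1$. Consequently
\begin{equation*}
    \int_{\mathbb{R}}\big(\mathcal{L}_{w,(t)}^{\,n_t}\,h_{\n}\big)\,dx\;=\;\lambda_{w,(t)}^{\,n_t}\big(1+o(1)\big)+O\big(r^{\,n_t}\big),
\end{equation*}
and the remainder vanishes as $n_t\to\infty$. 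It remains to establish the first-order expansion $\lambda_{w,(t)}=1-(1-w)\,\mu_{\n}(B_t(z))\,(\theta+o(1))$, with $\theta$ the extremal index: this follows by expanding $\lambda_{w,(t)}$ into its series of first-return contributions and invoking, exactly as in the proof of Proposition \ref{prop:extremevalue} via the uniform kernel bound $c_{\n}$ and Assumption \ref{itm:E1}, the vanishing of the higher-order return terms $q_{k,t}$ of \eqref{eq::extremalindex3}, so $\theta=1$. Since $\mu_{\n}$ is non-atomic one has $n_t\,\mu_{\n}(B_t(z))\to s$, whence $\lambda_{w,(t)}^{\,n_t}=\exp\big(n_t\log\lambda_{w,(t)}\big)\to e^{-(1-w)s}$; therefore $\mathbb{E}_{\mathbb{P}_{\n}}[w^{\mathcal{N}_z^{(t)}(s)}]\to e^{-(1-w)s}=e^{-s}\sum_{k\ge0}\tfrac{(ws)^{k}}{k!}$, and identifying coefficients in $w$ yields $\mathbb{P}_{\n}(\mathcal{N}_z^{(t)}(s)=k)\to\tfrac{s^{k}e^{-s}}{k!}$.

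I expect the main obstacle to be precisely the uniformity in $w$ of the perturbative step together with the identification $\theta=1$ in the expansion of $\lambda_{w,(t)}$: one must verify that the Keller--Liverani constants, the spectral-gap radius $r$, and the convergence of the leading projector are all uniform over $w$ in a complex neighbourhood of $[0,1]$, and that the clustering terms $q_{k,t}$ contribute nothing in the limit. However, every ingredient needed is already in place — the Lasota--Yorke/quasi-compactness bounds of Proposition \ref{prop::uniqueness}, the closeness estimate \eqref{eq::lconvergence}, the uniform kernel bound $c_{\n}$, the non-atomicity of $\mu_{\n}$ used in \eqref{eq::limitevt}, and the vanishing of the $q_{k,t}$ in \eqref{eq::extremalindex3} — so the argument is a careful bookkeeping extension of the proof of Proposition \ref{prop:extremevalue}; alternatively one could invoke directly the Poisson-statistics results obtained by this same spectral method in, e.g., \cite{keller2012rare} and \cite{lucarini2016extremes}.
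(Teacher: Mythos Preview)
Your proposal is correct and, in fact, considerably more detailed than the paper's own proof, which consists solely of the citation ``See \cite{nicolai2019limiting}.'' Your marked-operator approach via $\mathcal{L}_{w,(t)}$ and the Keller--Liverani perturbation theorem is precisely the spectral machinery underlying such Poisson-statistics results, so there is no substantive divergence; you are effectively unpacking what the cited reference establishes, and your closing remark about invoking \cite{keller2012rare} or \cite{lucarini2016extremes} directly is exactly the paper's strategy. You are also right that the displayed limit should read $\tfrac{s^{k}e^{-s}}{k!}$ rather than $\tfrac{t^{k}e^{-s}}{k!}$; this is a typo in the paper.
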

\begin{proof}
    See \cite{nicolai2019limiting}. 
\end{proof}

In particular, we have shown that the EI is equal to 1. Such an index is less than one when clusters of successive recurrences happen, which is the case, for instance, when the target point $z$ is periodic. Our heteroscedastic noise breaks periodicity, so we expect an EI equal to one.

We conclude this section with the following observation and example. 
\begin{observation}
While we rigorously prove an EVT for the Markov chain, we are still determining if a similar result holds for the deterministic map $T$ with respect to its invariant measure. Moreover, there are, in fact, only a few results on EVT for unimodal maps; see, for instance, \cite{freitas2008link, collet2001statistics,moreira2010extremal}.
\end{observation}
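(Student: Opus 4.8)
The plan to settle the question flagged above is to prove Gumbel's law for the deterministic system $(I,T,\mu)$ — where $\mu$ is the absolutely continuous invariant measure of a chaotic unimodal map as in Example~\ref{ex::unimodalmaps} — by transplanting the spectral argument used for the Markov chain. For a target point $z\in I$ I would keep the observable $\varphi$ of \eqref{eq::observables}, set $M_t\overset{\text{def}}{=}\max\{\varphi\circ x,\varphi\circ Tx,\dots,\varphi\circ T^{t-1}x\}$, and fix the boundary levels $u_t$ by the same normalization $t\,\mu(B(z,e^{-u_t}))\to\tau$. The goal is then $\mu(M_t\le u_t)\to e^{-\theta\tau}$, with extremal index $\theta=1$ for $\mu$-typical $z$ and $\theta<1$ when $z$ is periodic. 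This dichotomy is precisely the structural difference from the noisy chain: the heteroscedastic perturbation destroys periodicity and forces $\theta=1$ everywhere, whereas the deterministic map retains the periodic obstruction.

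The spectral core is to replace $\mathcal{L}$ by the Perron--Fr\"obenius operator $\mathcal{L}_T$ of the deterministic map (the $\eta=0$ instance of the operators $\mathcal{L}_\eta$ of Section~\ref{sec::randomtrasformation}) and to define the punctured operator $\tilde{\mathcal{L}}_{(t)}g\overset{\text{def}}{=}\mathcal{L}_T(g\,\1_{B_t^{c}(z)})$ exactly as in \eqref{eq::perturbedoperator}. Writing $\mu(M_t\le u_t)=\int \tilde{\mathcal{L}}_{(t)}^{\,t}h\,dx$ as in \eqref{eq::probabilityEVT}, the conclusion follows once $\mathcal{L}_T$ has a spectral gap on a suitable Banach space $\mathcal{B}$ and $\tilde{\mathcal{L}}_{(t)}\to\mathcal{L}_T$ in the Keller--Liverani sense \eqref{eq::triplenorm}--\eqref{eq::lconvergence}. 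The perturbation theorem then produces a simple leading eigenvalue $\lambda_t=1-\theta\,\mu(B_t(z))+o(\mu(B_t(z)))$, whence $\lambda_t^{\,t}\to e^{-\theta\tau}$.

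For the extremal index I would compute $\theta=1-\sum_{k\ge0}q_k$ with $q_k$ as in \eqref{eq::extremalindex}--\eqref{eq::extremalindex2}, now evaluated along the deterministic orbit $T^k x$. The decisive step is the control of short returns: here there is no smoothing kernel to dominate $q_{k,t}$ as in \eqref{eq::extremalindex3}, so one must instead invoke the exponential decay of correlations for $(I,T,\mu)$ — available for Collet--Eckmann/Benedicks--Carleson maps — together with an \emph{absence of short returns} estimate for $\mu$-typical $z$ (cf.\ \cite{freitas2008link, collet2001statistics, moreira2010extremal}). For such $z$ all $q_k$ vanish and $\theta=1$; for a periodic $z$ of minimal period $p$ with multiplier $\Lambda_p=(T^p)'(z)$ the cluster sum is geometric and gives $\theta=1-|\Lambda_p|^{-1}$.

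The main obstacle is that, unlike the noisy case, the invariant density $h$ is \emph{unbounded}, with inverse-square-root singularities on the postcritical orbit $\{z_k=T^k(\mathfrak{c})\}$ (cf.\ the representation of \cite{baladi2012linear} quoted in Subsection~\ref{subsec::multifractal1}); hence the naive choice $\mathcal{B}=BV$ fails and Assumption~\ref{itm:E1} cannot be read off directly. The clean route is to lift $T$ to a Young tower (Collet's approach \cite{collet2001statistics}) and run the Keller--Liverani perturbation on the tower transfer operator, for which the hole $B_t(z)$ pulls back to a well-behaved family; alternatively, one handles $z$ on the critical orbit separately, where the local scaling $\mu(B(z_k,r))\sim r^{1/2}$ forces a modified normalization $u_t$ and may alter the limit law. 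Verifying the Keller--Liverani smallness \eqref{eq::triplenorm} on the tower, and pinning down the behavior at the singular points $z_k$, is where essentially all the work lies.
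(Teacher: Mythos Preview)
The observation you are addressing is not a theorem; it is a remark in which the authors explicitly state that EVT for the deterministic map $T$ with respect to its invariant measure $\mu$ remains \emph{open} for them, and point to the sparse existing literature. Consequently the paper contains no proof to compare against: the ``statement'' carries no mathematical content beyond the acknowledgement that the question is, at the time of writing, unresolved by the authors.

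What you have written is therefore not a proof of the observation but a research programme aimed at \emph{resolving} the open question it flags. As such it is sensible and well-informed --- the Young-tower/Keller--Liverani route you sketch is indeed the standard technology, and your identification of the unbounded invariant density (hence the failure of $BV$ and of the kernel bound behind \eqref{eq::extremalindex3}) as the chief obstruction is exactly the reason the authors do not attempt the deterministic case. But it is not a proof: the hard steps you yourself list at the end (lifting the hole to the tower, verifying the triple-norm smallness there, controlling short returns without a smoothing kernel, handling target points on the postcritical orbit where $h$ blows up) are precisely the substance of the papers the observation cites, and carrying them out rigorously for the full class of maps in Example~\ref{ex::unimodalmaps} would constitute a separate article rather than a verification of this remark.
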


\section{An application to systemic risk}\label{sec::example}
This section presents a stylized model of the leverage dynamics to which our theory applies. A part from providing a potential application of the models considered in this paper, we will use the specific model to perform numerical simulations of the maps and to test the finite size effect of some asymptotic results presented above. The model is an extension of the one presented in  \cite{lillo2021analysis} since we add here a possible relation between liquidity and leverage, whereas in \cite{lillo2021analysis} liquidity was considered constant. The description of the model follows the same lines as the presentation in \cite{lillo2021analysis}.\\
\indent A representative financial institution (hereafter a bank) takes investment decisions at discrete times $t \in \mathbb{Z}$, which defines the slow time scale of the model. At each time the bank's balance sheet is characterized by the asset $A_t$ and equity $E_t$, which together define the leverage $\lambda_t := A_t/E_t$. The bank wants to maximize leverage (by taking more debt) to increase profits, but regulation constraints the bank's Value-at-Risk (VaR) in such a way that $\lambda_t = \frac{1}{\alpha \sigma_{e,t}}$, where $\alpha$ depends on the return distribution and VaR constraint\footnote{For example, if returns are Gaussian and the probability of VaR is $5\%$, it is $\alpha=1.64$.}, and $\sigma_{e,t}$ is the expected volatility at time $t$ of the asset, which in this model is composed by a representative risky investment. Thus at each time $t$ the bank recomputes $\sigma_{e,t}$ and chooses $\lambda_t$.
Then, in the interval $[t,t+1]$ the bank trades the risky investment to keep the leverage close to the target $\lambda_t$.
The trading process occurs on the points of a grid obtained by subdividing $[t,t+1]$ in $\n$ subintervals of length $1/\n$ (the fast time scale).
The dynamics of the investment return can be written as
\begin{equation}\label{eq:riskyreturn}
    r_{t+k/\n} = \varepsilon_{t+k/\n} + e_{t+(k-1)/\n},\quad k=1, 2, \ldots, \n,
\end{equation}
where $\varepsilon_{t+k/\n}$ and $e_{t+(k-1)/\n}$ are, respectively, the exogenous and endogenous component of the return.
The former is a white noise term with variance $\sigma^2_\epsilon$, while the latter depends on the banks' demand for the risky investment in the previous step.
For each bank, the demand for the risky investment at time $t+k/\n$ is the difference between the target value of $A_t$ to reach $\lambda_t$ and its actual value.
Since the bank's asset is composed by the risky investment, an investment return $r_{t+k/\n}$ modifies $A_t$ and the bank trades at each grid point to reach the target leverage.
It is possible to show (see \cite{corsi2016micro,mazzarisi2019panic}) that to achieve this, at each time $t+k/\n$ the bank's demand for the risky investment is
$$
D_{t+k/\n}=(\lambda_t-1) A^*_{t+(k-1)/\n} r_{t+k/\n}, 
$$
where $A^*_{t+(k-1)}$ is the target asset size in the previous step.
If there are $M$ identical banks, the aggregated demand is $MD_{t+k/\n}$.
The endogenous component of returns $e_{t+k/\n}$ is determined by the aggregated demand by the equation
\begin{equation}\label{eq::equationgamma}
   e_{t+k/\n}=\frac{1}{\gamma_t}\frac{MD_{t+k/\n}}{C_{t+k/\n}},
 \end{equation}
where  $C_{t+k/\n}=MA^*_{t+(k-1)/\n}$ is a proxy of the market capitalization of the risky asset, and
$\gamma_t$ is a parameter measuring at each point in time the investment liquidity. Notice that in \cite{lillo2021analysis} this parameter is considered constant. Using the above expression, it is
$$
e_{t+k/\n}=\frac{\lambda_t-1}{\gamma_t} e_{t+(k-1)/\n} = \phi_t e_{t+(k-1)/\n}
$$
and thus in the period $[t, t+1]$ the return $r_{t+k/\n}$  follows an AR(1) process with autoregression parameter $\phi_t=(\lambda_t-1)/\gamma_t$ and idiosyncratic variance $\sigma^2_\epsilon$. In the present paper, we assume that $\gamma_t$ is linked to the level of the leverage $\lambda_t$ by the following relation:
\begin{equation}\label{eq::affinerelation}
    \gamma_t = \gamma_0 + c \lambda_t,
\end{equation}
where $\gamma_0$ is a positive constant, and $|c|\leq 1$. As far as we know, there is not a
unified consensus in the literature on the type (linear or not), and the sign
of the relationship between the market\footnote{In the financial literature, one finds also the notion of book leverage. Book leverage is defined as the ratio of total assets to book equity, while market leverage is defined as the ratio of enterprise value (total assets - book equity + market equity) to market equity. Empirically, book-measured leverage and market-measured leverage lead to different inferences about the time series properties of leverage; see the debate between \cite{adrian2014financial} and \cite{he2017intermediary}. We here refer to the market leverage in our discussion because in order to be consistent with our empirical application in \cite{lillo2021analysis}.} leverage and liquidity. For instance, \cite{wojcik2015determinants} states, ``The relationship between market leverage ratio and liquidity risk in the long term is negative and statistically significant \textit{only} for commercial banks belonging to the old EU countries". In particular, it seems that there is no a universal statement on the sign of $c$. As regards as the type of dependence, we decide for a linear relationship. Admittedly, the linear relationship may seem too crude, but a non linear dependence would be an additional technicality that would not add to the present work’s conceptual advancements.\\
\indent To close the model, we specify how the bank forms expectations $\sigma_{e,t}$ on future volatility at time $t$.
We assume that bank uses adaptive expectations, which implies that
$$
\sigma^2_{e,t}=\omega \sigma^2_{e,t-1}+(1-\omega)\hat \sigma^2_{e,t},
$$
where $\omega \in [0,1]$ is a parameter weighting between the expectation at $t-1$ and the estimation $\hat\sigma^2_{e,t}$ of volatility   obtained by the return data in $[t-1,t]$.
As done in practice, this is obtained by estimating the sample variance of the returns in $[t-1,t]$, i.e.
\begin{multline}
\hat\sigma^2_{e,t} = \widehat{\text{Var}}\left[\sum_{k=1}^{\n} r_{t-1+k/\n}\right] \\
= \left(1+2\frac{\hat \phi_{t-1}(1-\hat \phi_{t-1}^\n)}{1-\hat \phi_{t-1}}-2\frac{(\n\hat \phi_{t-1}-\n-1)\hat \phi_{t-1}^{\n+1}+\hat \phi_{t-1}}{\n(1-\hat \phi_{t-1})^2}\right) \frac{\n \hat \sigma_{\epsilon}^2}{1-\hat \phi^2_{t-1}},
\end{multline}
where the last expression gives the aggregated variance of an AR(1) process as a function of the AR estimated parameters $\hat \phi_{t-1}$ and $\hat \sigma^2_\epsilon$.
In the following we will assume that these are the Maximum Likelihood Estimators (MLE).
We remind that when $\n$ is large, $\hat \phi_{t-1}$ is a Gaussian distributed variable with mean $\phi_{t-1}$ and variance $(1-\phi^2_{t-1})/\n$.

In conclusion, the leverage dynamics is described by the following equations:
\begin{equation}\label{eq:model_final_1}
    \begin{cases}
    & \lambda_t=\left(\omega\frac{1}{\lambda^2_{t-1}}+(1-\omega)\alpha^2 \widehat{\text{Var}}[\sum_{k = 1}^{\n} r_{t-1+k/\n}]\right)^{-1/2},\\
    & r_s = \phi_{t-1}r_{s-1/\n} + \epsilon_s,\qquad s=t-1+k/\n,\quad k=1,2,\ldots,\n,
    \end{cases}
\end{equation}
Since slow variables evolve depending on averages of the fast variables, the model is a {\em slow-fast deterministic-random dynamical system}.
By using the expression above for the variance, we can rewrite the equation for the slow component only as
$$
\lambda_t=\left(\omega\frac{1}{\lambda^2_{t-1}}+(1-\omega)\alpha^2 \hat \sigma^2_{e,t}\right)^{-1/2},
$$
where the estimator $\hat \sigma^2_{e,t}$ can be seen as a stochastic term depending on $\lambda_{t-1}$ and whose variance goes to zero when $\n\to\infty$. 

If $\n$ is large, the above map reduces to
\begin{equation*}
    \lambda_t=\left(\omega\frac{1}{\lambda^2_{t-1}}+\frac{(1-\omega)\alpha^2 \n \hat \sigma_{\epsilon}^2}{(1-\hat \phi_{t-1})^2}\right)^{-1/2},
\end{equation*}

When changing $\n$ also $\sigma^2_\epsilon$ changes, since the AR(1) can be seen as the discretization of a continuous time stochastic process (namely an Ornstein-Uhlenbeck process).
A simple scaling argument shows that the quantity $\Sigma_{\epsilon}=\sigma^2_\epsilon \n$ is instead constant and independent from the discretization step $1/\n$. With abuse of notation, we set: $\Sigma_{\epsilon}\overset{\text{def}}{=}\lim_{\n\rightarrow\infty} \n \hat{\sigma}^2_{\epsilon}$, and we define $\overline{\Sigma}_{\epsilon}\overset{def}{=}(1-\omega)\alpha^2\Sigma_{\epsilon}$. At this point, we observe that since in the large $\n$ limit the MLE estimator $\hat{\phi}_{t-1}$ is a Gaussian variable with mean $\phi_{t-1}$ and variance $(1-\phi^2_{t-1})/\n$, we can write
\begin{equation*}
    \hat{\phi}_{t-1}=\phi_{t-1}+\eta_{t-1},\quad\quad\eta_{t-1}\overset{d}{\sim}\mathcal{N}\left(0,\frac{(1-\phi^2_{t-1})}{\n}\right).
\end{equation*}
By using the definition of $\gamma_t$ in Equation \eqref{eq::affinerelation}, by defining $\phi_t\overset{\text{def}}{=}\frac{\lambda_t-1}{\gamma_t}$, and by introducing the function $V : \mathbb{R}^2\rightarrow\mathbb{R}$ given for any $(u,v)\in\mathbb{R}^2$ by
\begin{equation}\label{eq::functionV}
    V(u,v)\overset{\text{def}}{=}\left(\frac{\omega(1-c u)^2}{(1+\gamma_0 u)^2}+\frac{\overline{\Sigma}_{\epsilon}}{(1-(u+v))^2}\right)^{-1/2},
\end{equation}
we get
\begin{equation}\label{eq::functionphi}
    \phi_t = \frac{V(\phi_{t-1},\eta_{t-1})-1}{\gamma_0 + c V(\phi_{t-1},\eta_{t-1})} \overset{\text{def}}{=}\mathcal{F}(\phi_{t-1},\eta_{t-1})
\end{equation}
\noindent If the noise $\eta_{t-1}$ is small (i.e., $\n$ is large), we can perform a series expansion, obtaining:
\begin{equation*}
    V(\phi_t,\eta_t) = A(\phi_t) + B(\phi_t)\eta_t,
\end{equation*}
where 
\begin{equation*}
    \begin{split}
        A(u)&\overset{\text{def}}{=}\frac{1+\gamma_0 u}{[\omega(1-c u)^2 + \overline{\Sigma}_{\epsilon}(1-u)^{-2}(1+\gamma_0 u)^2]^{1/2}}\\
        B(U)&\overset{\text{def}}{=}\frac{ \overline{\Sigma}_{\epsilon}(1-u)^{-1}(1+\gamma_0 u)^{3}}{[\omega(1-c u)^2 + \overline{\Sigma}_{\epsilon}(1-u)^{-2}(1+\gamma_0 u)^2]^{3/2}}\\
    \end{split}
\end{equation*}
Accordingly, Equation \eqref{eq::functionphi} becomes:
\begin{equation*}
    \phi_t = \frac{A(\phi_{t-1})+\eta_{t-1}B(\phi_{t-1})-1}{\gamma_0 + c A(\phi_{t-1}) + c \eta_{t-1} B(\phi_{t-1})}.
\end{equation*}
By performing, again, a series expansion we obtain:
\begin{equation}\label{eq::final}
    \begin{split}
        \phi_t  &= \frac{A(\phi_{t-1})-1}{\gamma_0 + c A(\phi_{t-1})}+\frac{(1-\phi_{t-1^2})^{1/2}(\gamma_0+c)B(\phi_{t-1})}{\sqrt{\n}(\gamma_0 + c A(\phi_{t-1}))}\tilde{\eta}_{t-1},\\
                &\overset{def}{=} T(\phi_{t-1})+\sigma_{\n}(\phi_{t-1})\tilde{\eta}_{t-1}
    \end{split}
\end{equation}
with $\tilde{\eta}_{t-1}\overset{d}{\sim}\mathcal{N}(0,1)$, $t \in \mathbb{N}_{\geq 1}$. Notice that we performed a series expansion for $\eta_{t-1}$  small, which is justified whenever the variable $\phi$ stays far from one. Because we are going to iterate the map $\mathcal{F}$ in Equation \eqref{eq::functionphi} for $\phi \in [0,1]$ and $|\eta|\ll 1$, it is enough to show that:
\begin{equation*}
    \max_{\phi \in [0,1], |\eta|\ll 1} |\mathcal{F}(\phi,\eta)|,<1.
\end{equation*}
because in this case all the successive iterates $|\mathcal{F}^{t}(\phi,\eta)|$, $t\in\mathbb{N}_{\geq 1}$, satisfy the same bound. It is not difficult to see that the bound holds true provided that $\gamma_0$ is sufficiently large.\\ 
Now, we study the deterministic map $T$ in Equation \eqref{eq::final}, which is the deterministic component  of $\mathcal{F}$ for $\eta$ small.  By arguing as above, we have that
\begin{equation}
    \Delta \overset{\text{def}}{=}\max_{\phi \in [0,1]}|T(\phi)|<1,
\end{equation}
provided that $\gamma_0$ is sufficiently large.
\begin{figure}
    \centering
    \includegraphics[scale=0.55]{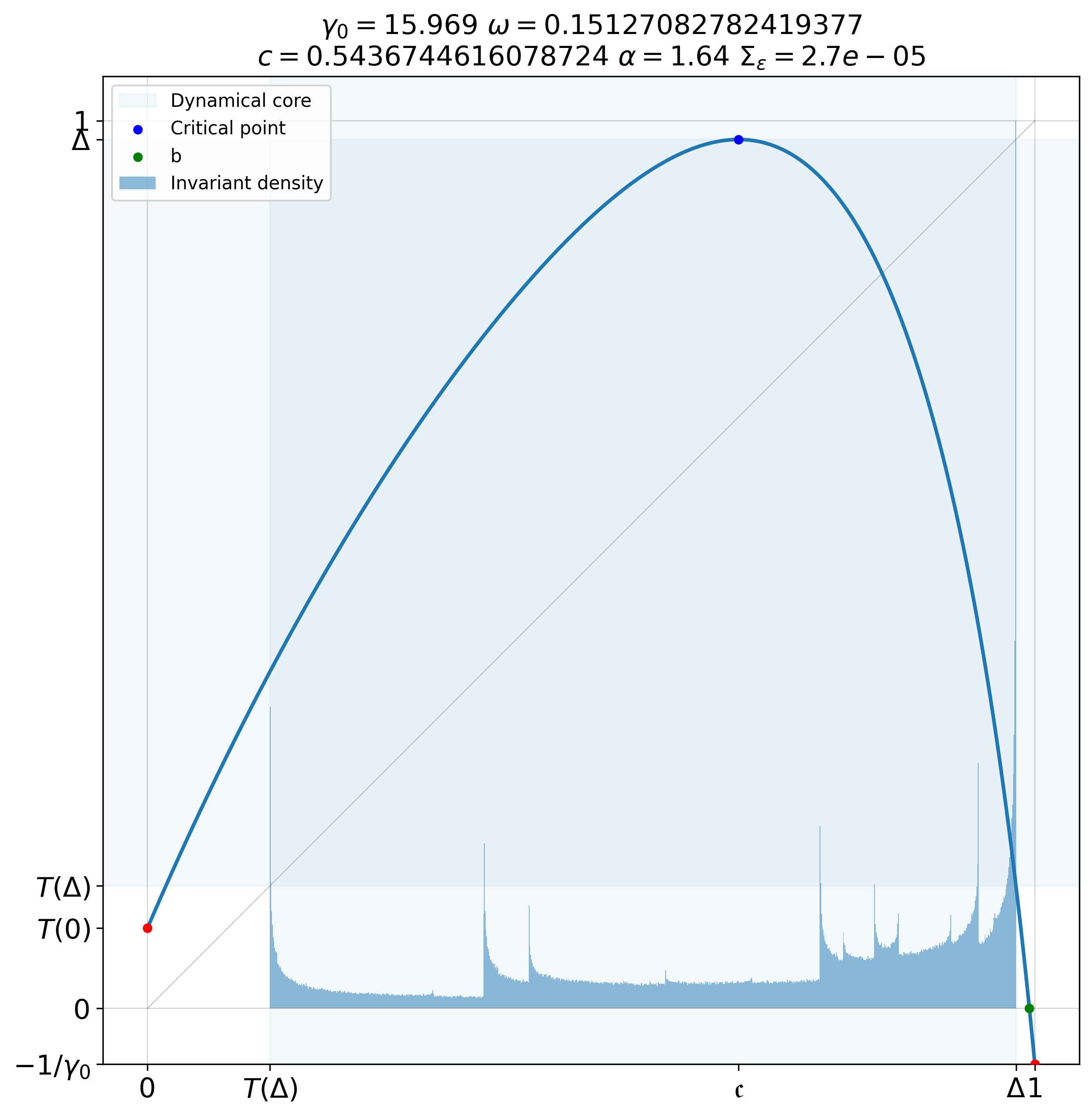}
    \caption{Plot of the deterministic component $T(\phi)$, $\gamma_0=15.969$, $\alpha=1.64$, $\Sigma_{\epsilon}=2.7\times 10^{-5}$. The value for $\gamma_0$ is taken from the empirical analysis in \cite{lillo2021analysis}, Section 7.2, (where it is denoted simply by $\gamma$) . The value $\alpha=1.64$ corresponds to a VaR constraint of $5\%$ in case of a Gaussian distribution for the returns. The values $\Sigma_{\epsilon}=2.7\times10^{-5}$ is taken from \cite{mazzarisi2019panic}, Table 1, and corresponds to the exogenous idiosyncratic volatility at the time scale of portfolio decisions. The value for $\omega$ and $c$ are randomly sampled from the dynamical core, once fixed the other parameters. The \textit{Blue dot} indicates the critical point $\mathfrak{c}$, the \textit{Green dot} the intersection between the map and the horizontal axis, the left-hand \textit{Red dot} indicates the image of 0, the right-hand \textit{Red dot} indicates $\lim_{\phi\rightarrow1^{-}}T(\phi)=-\frac{1}{\gamma_0}$. The support of the invariant density belongs to the so-called \textit{dynamical core} $[T(\Delta),\Delta]$.} 
    \label{fig::deterministicmap}
\end{figure}

Figure \ref{fig::deterministicmap} shows the map $T$ for some suitably chosen parameters:
\begin{itemize}
    \item $\gamma_0=15.969$; this value is taken from the empirical analysis in \cite{lillo2021analysis}, Section 7.2 (where it is denoted simply by $\gamma$). It corresponds to the maximum value of the leverage computed over a 4,389 time series of US Commercial Banks and Saving and Loans Associations; see \cite{lillo2021analysis}, Section 7.1, for a detailed description of the dataset.  
    \item $\alpha=1.64$; it corresponds to a VaR constraint of $5\%$ in case of a Gaussian distribution for the returns.
    \item $\Sigma_{\epsilon}=2.7\times10^{-5}$; this value is taken from the numerical analysis in \cite{mazzarisi2019panic}, Table 1, and corresponds to the exogenous idiosyncratic volatility at the time scale of portfolio decisions.
    \item The values for $\omega$ and $c$ are free parameters and are randomly sampled (e.g., from the dynamical core).
\end{itemize}

The figure shows that that $T$ is a unimodal map with a negative Schwarzian derivative (see below). In the figure, $\Delta$ is the iterate of the unique critical point (\textit{Blue dot}) $\mathfrak{c}$ of $T$, i.e., $\Delta=T(\mathfrak{c})$. Therefore, if we take the initial condition $\phi_0$ in the interval $[\Delta, 1]$, then all the successive iterates $|T^{t}(\phi_0)|$, $t\in\mathbb{N}_{>1}$ will stay in $[0, \Delta]$.\\
\indent By definition, the (re-scaled) leverage of the representative bank is a positive quantity. However, as one can also notice from the graph in Figure \ref{fig::deterministicmap}, we have that $\lim_{\phi \rightarrow 1^{-}} T(\phi) = -\frac{1}{\gamma_0}$. Therefore, we need to slightly modify the definition of our map by restricting it to the interval $[0,b]$, being $b$ the point of intersection between the map and the horizontal axis (\textit{Green dot} in Figure \ref{fig::deterministicmap}) Notice that this definition makes sense when $\Delta < b < 1$. In addition, as we verified numerically, if we take the initial condition in the interval $[\Delta, b]$, then all the other iterates will stay in $[0,\Delta]$. In particular, the previous redefinition is legitimate also if we consider the effect of the noise. Indeed, it is clear from the considerations in Section \ref{sec::assumptions} that if we symmetrize about the horizontal axis, the graph of $T$ in the interval $[b,1]$ to make it positive, then the equilibrium state for the chain, precisely its unique stationary measure, has support that does not intersect the interval $[b,1]$ if $a$ satisfies the bound in Equation \eqref{eq:boundona2}. Also, we verified numerically that the condition $\Delta < b < 1$ holds for a $\gamma_0$ sufficiently large. We continue to denote by $T$ the map after this redefinition. We now modify the map $T$ by enlarging on the left its domain of definition to take into account the action of the additive noise. To do so, we first notice that 
\begin{equation*}
    T(0) = a = \frac{1-\sqrt{\omega+\overline{\Sigma}_{\epsilon}}}{\gamma_0\sqrt{\omega+\overline{\Sigma}_{\epsilon}}+c}>0;
\end{equation*}       
see, the \textit{Red-left dot} in Figure \ref{fig::deterministicmap}. With abuse of notation, (re)define\footnote{Cfr. Equation \eqref{eq::equationgamma}} $\Gamma \overset{\text{def}}{=} b-\Delta$, and extend the domain of definition $T$ to the larger interval $[-\Gamma, b]$ so that $T$ is continuous at $0$ and on $[-\Gamma,0)$ is $C^{4}$ smooth, positive and decreasing, with $T(-\Gamma)<\Delta$. Again, with abuse of notation, we will still denote by $T$ the map after this second redefinition, and, hereafter, write  $I \overset{def}{=}[-\Gamma,b]$.\\
\noindent   The map $T$ just-defined verifies Assumption \ref{itm:B12} and we choose the distribution of the random variables $(\tilde{\eta}_{t})_{t \geq 1}$ in order to satisfy Assumption \ref{itm:C1}. We need to verify Assumption \ref{itm:A1}-(c). In order to do so, we verify numerically the following important result taken from \cite{keller1990exponents} (see, also, \cite{hennion2001limit}, Theorem 12). Define the number
\begin{equation}\label{eq::uniqueness}
    \ell_{T}(x) =\lim_{t \rightarrow \infty}\frac{1}{t}\log|(T^{t})^{'}(x)| = \lim_{t \rightarrow \infty} \sum_{i=0}^{t-1}\log|T^{'}(T^{t}(x))|,\quad x \in I.
\end{equation}
Suppose $T$ is a unimodal map with negative a Schwarzian derivative, and non-flat critical point with $\ell_{T}(x)=\kappa>0$ for \textrm{Leb}-almost all $x \in I$, then $T$ admits a unique absolutely continuous invariant probability measure $\nu$. In this case, $\kappa$ will be the Lyapunov exponent of the map $T$ with respect to $\nu$. Figure \ref{fig::indicatorabs} represents the value of $\ell_{T}$ in the same parameters configuration of Figure \ref{fig::deterministicmap}.\\
\begin{figure}
    \centering
    \includegraphics[scale=0.55]{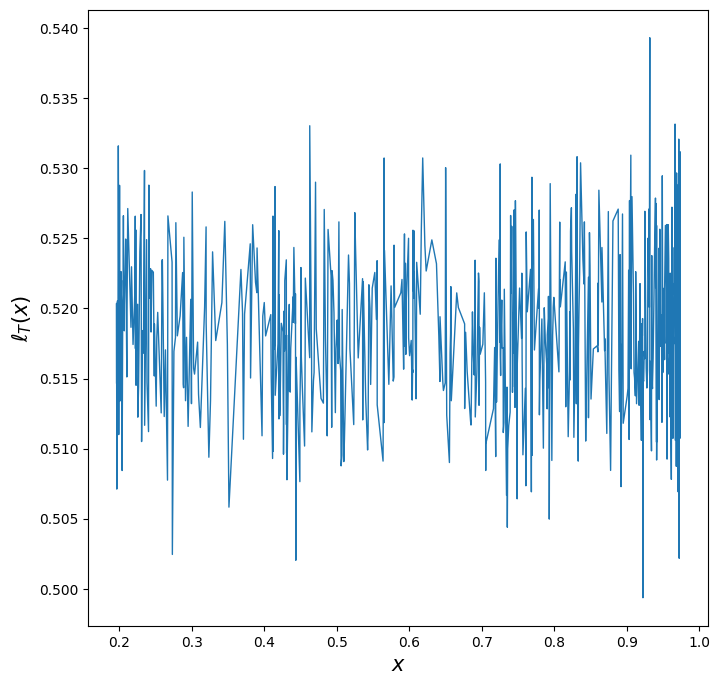}
    \caption{Plot of the indicator in Equation \eqref{eq::uniqueness}. \textit{Parameters' configuration:} See the caption of Figure \ref{fig::deterministicmap}.} 
    \label{fig::indicatorabs}
\end{figure}

\indent Once we have verified that our systemic risk model in Equation \eqref{eq::final} satisfies Assumptions \ref{itm:A1}, \ref{itm:B1}, and \ref{itm:C1}, we pass to investigate whether it satisfies, as it should be, the mathematical properties in Section \ref{sec::mathematical_properties} and the EVT in Section \ref{sec::extremevalues}. The order in which we present the results reflects the order in which they were presented in the latter sections.

\subsection{Dynamics properties of the map}\label{subsec::stationarymeasureandstoch}
The bifurcation diagram of a dynamical system shows how the asymptotic distribution of a typical orbit varies as a function of a parameter. For our map, either the memory parameter $\omega$ or the parameter $c$ can be employed as bifurcation parameter. Figure \ref{fig::bifurcation} shows the bifurcation diagram as a function of $c \in [-1,1]$. The choice of the parameter $\omega$ for this plot corresponds to a value of $\omega$ for which a specific pair $(c,\omega)$ is in the dynamical core ($\omega=0.669$).\\
\indent We now comment Figure \ref{fig::bifurcation}. Moving backward, between 1 and 0.3 there is an attracting fixed point. Then, as $c$ gets smaller and smaller, the period one behaviour splits into period two and the two values are getting further apart. The situation is more complex for $c$ in $[-0.48,0.3]$ as small parameter variations can change the dynamics from chaotic to periodic and back. Finally, when $c$ is between $-0.48$ and $-1$ there is an attracting fixed point. However, in this range, $\phi_t$ takes negative values and this does not make sense in our financial application, since it would correspond to negative leverage. Figure \ref{fig::framebyframebifurcation} shows how the graph of the map $T$ changes as a function of $c$ reflecting the description of the bifurcation diagram.
  
\begin{figure}
    \centering
    \includegraphics[scale=0.155]{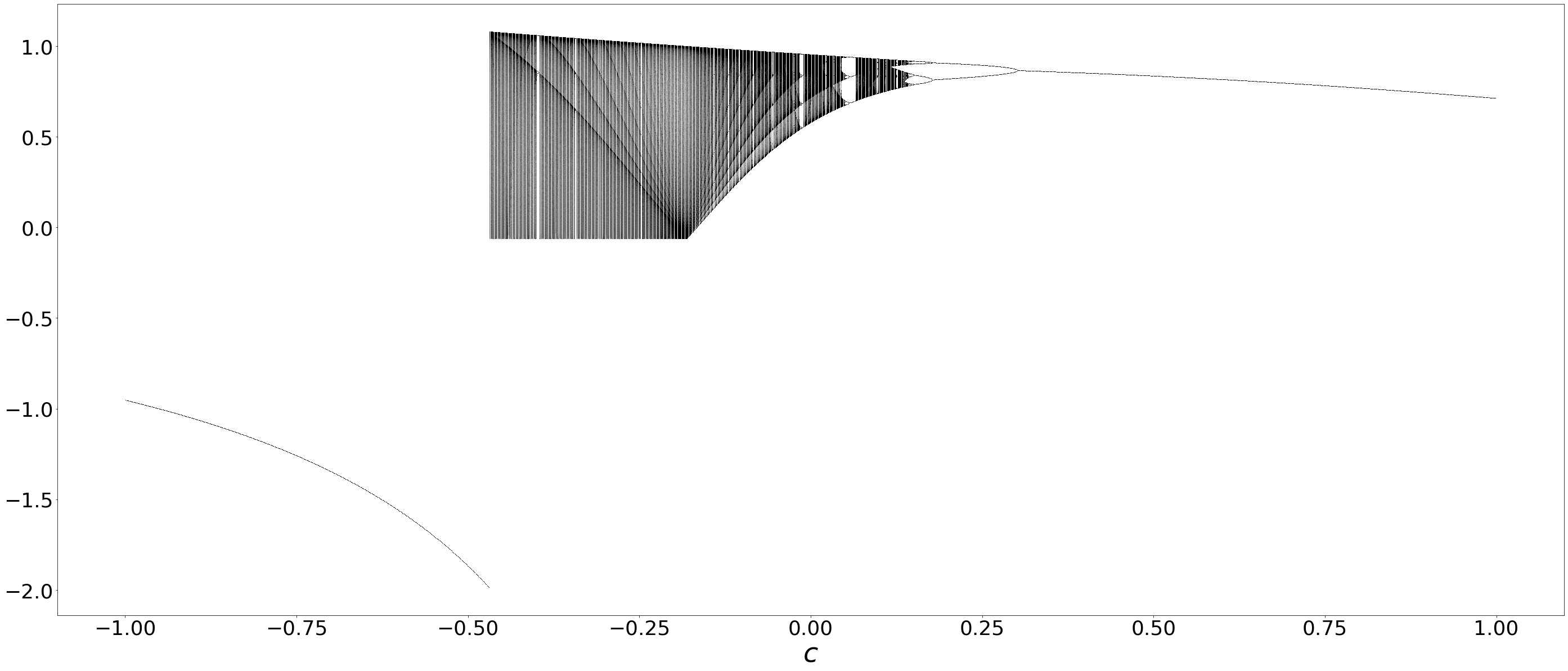}
    \caption{Bifurcation diagram for $T$. \textit{Parameters' configuration:} $(\gamma_0, \alpha, \Sigma_{\epsilon}, \omega)=(15.969, 1.64, 2.7\times10^{-5}, 0.669)$ and $c\in [-1,1]$.}
    \label{fig::bifurcation}
\end{figure}
\begin{figure}
    \centering
    \includegraphics[scale=0.32]{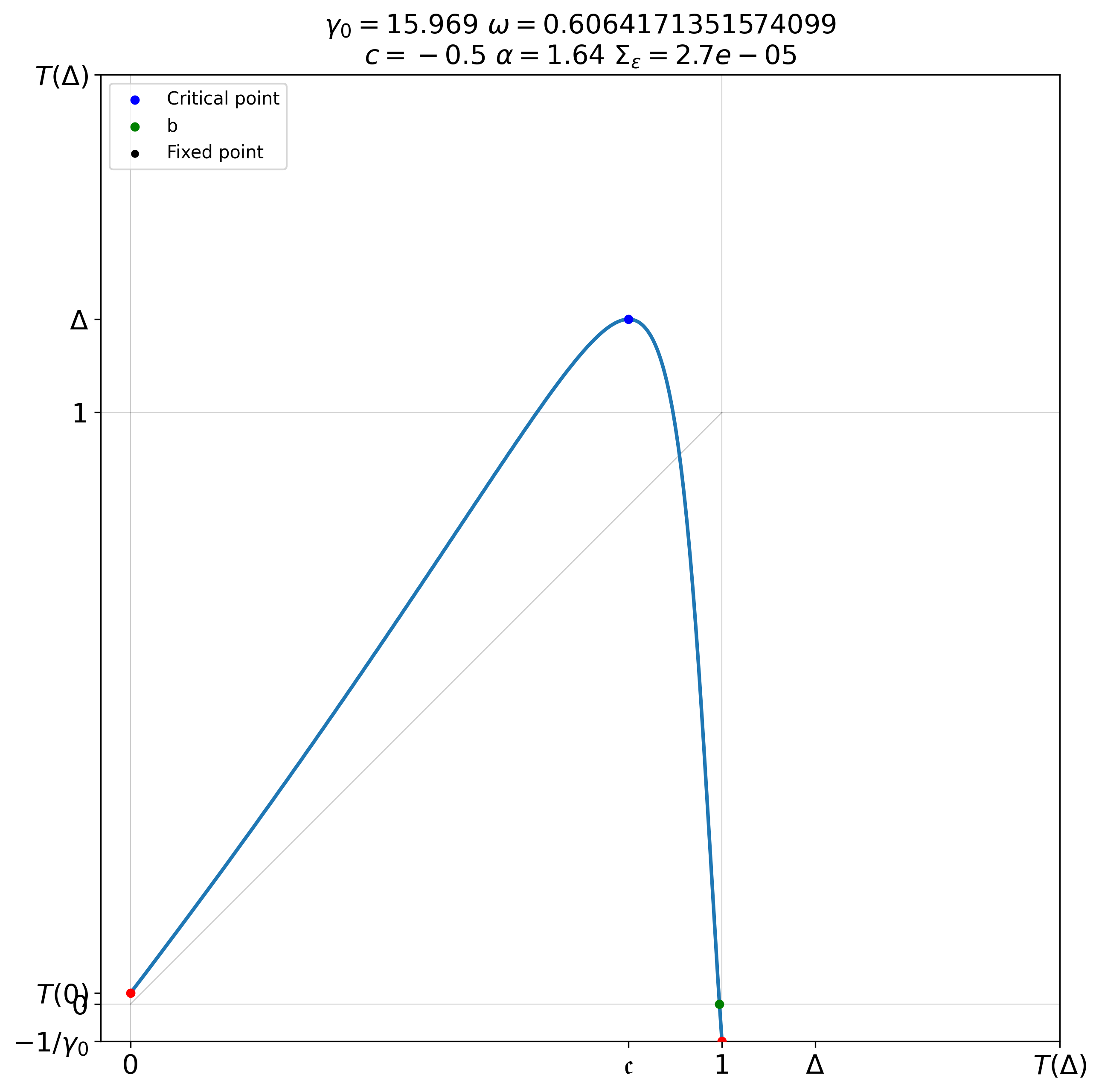}
    \includegraphics[scale=0.32]{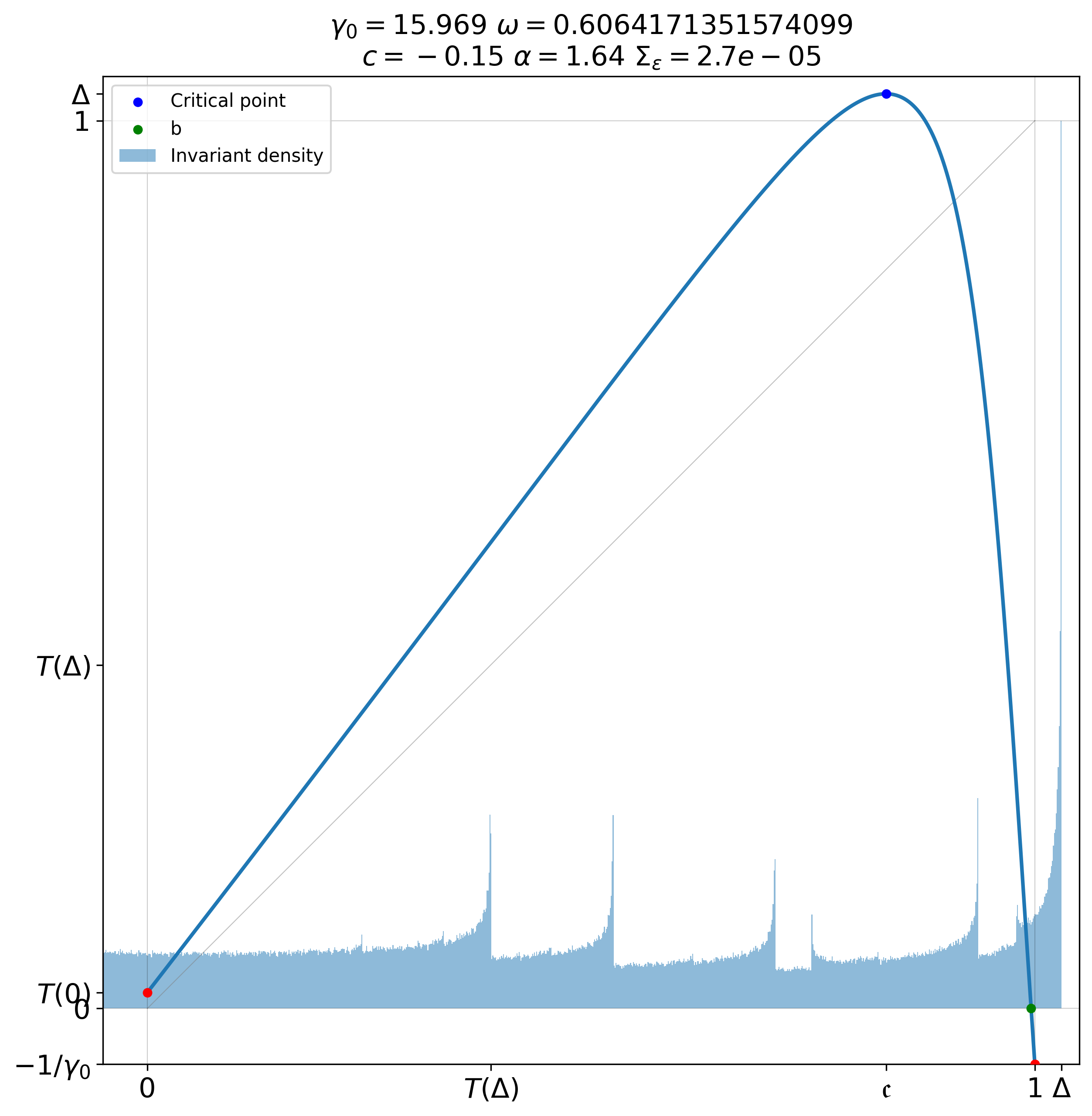}
    \includegraphics[scale=0.32]{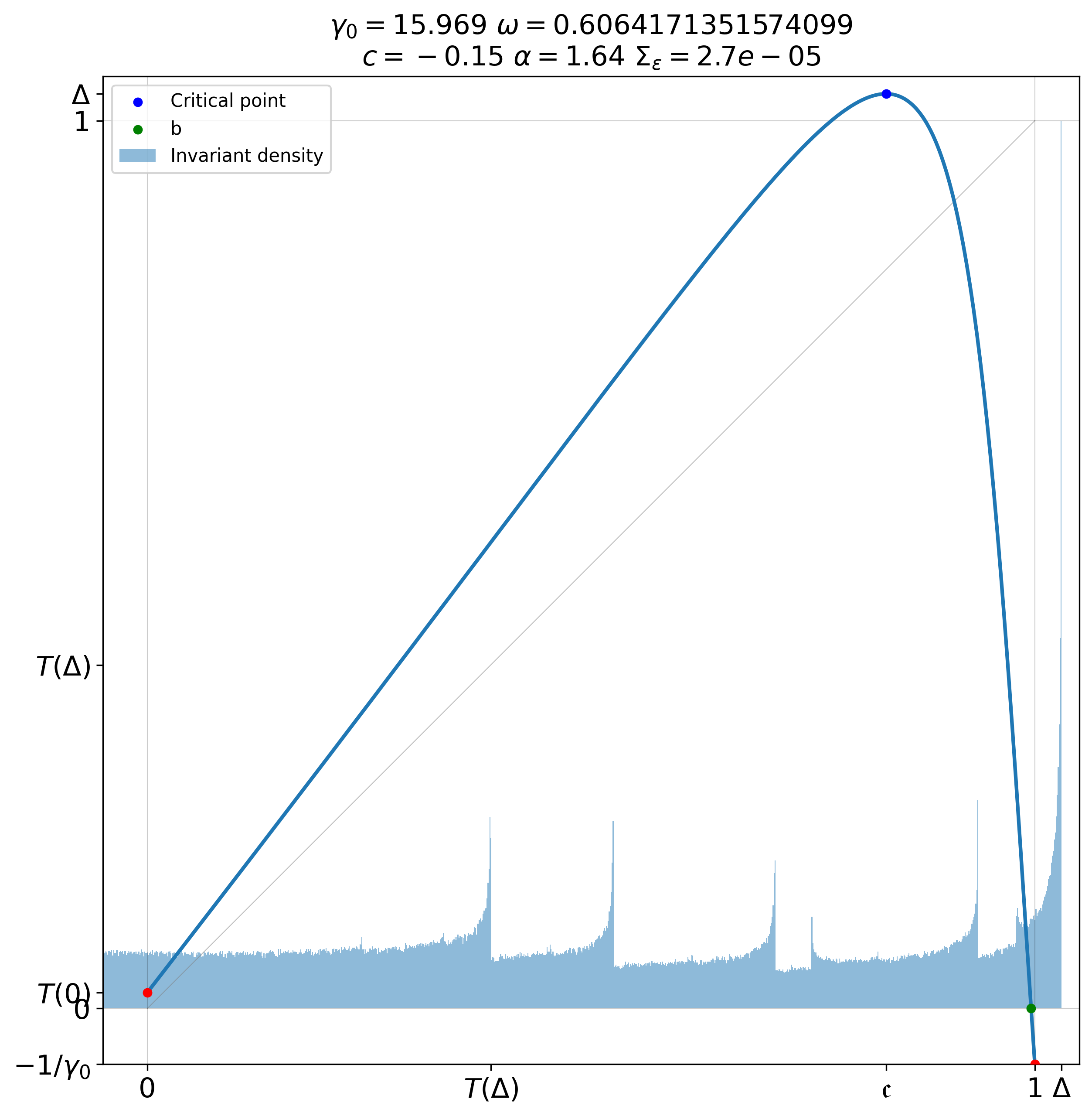}
    \includegraphics[scale=0.32]{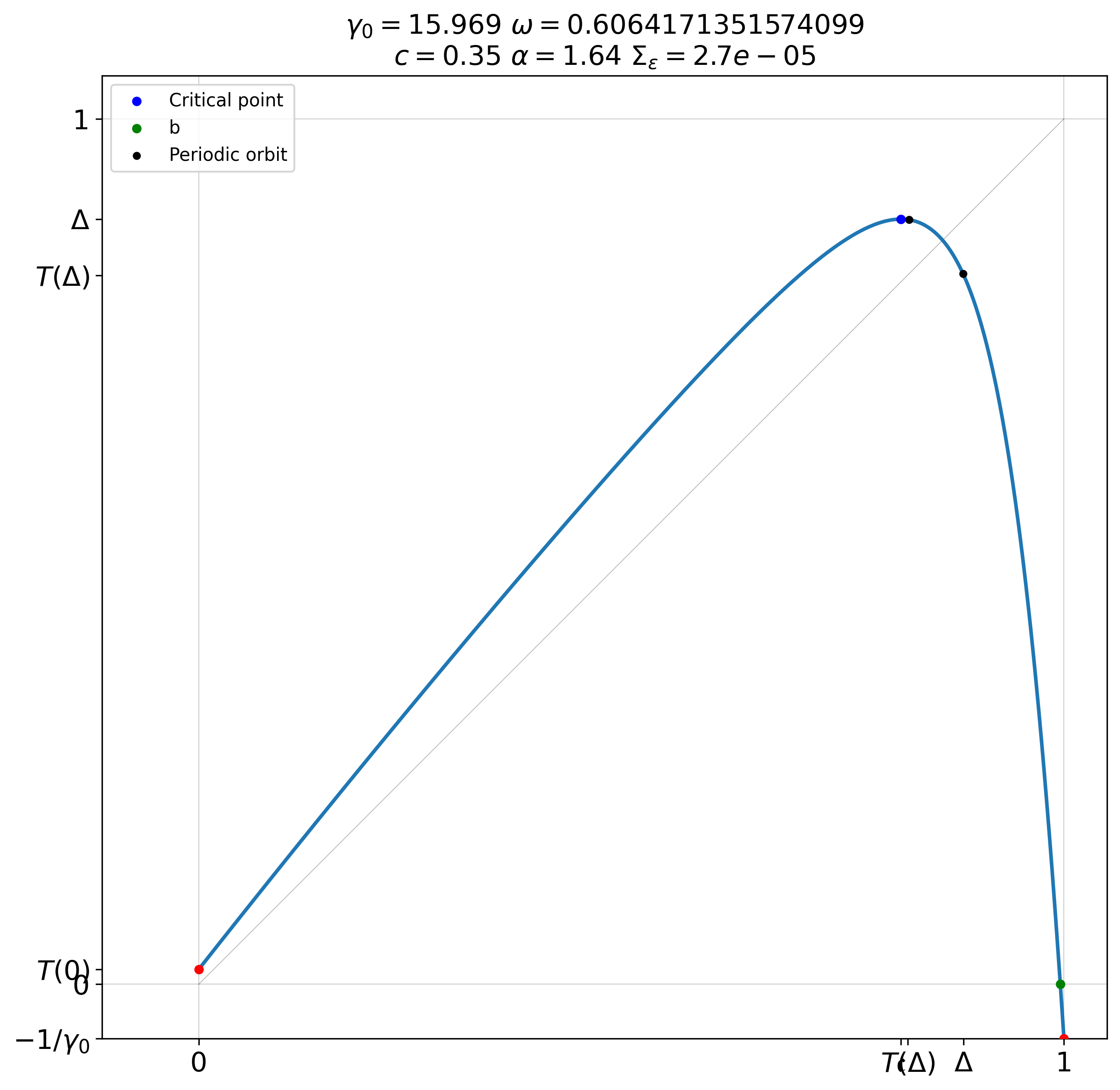}
    \caption{Plot of the deterministic component $T(\phi)$. \textit{Parameters' configuration:} $(\gamma_0, \alpha, \Sigma_{\epsilon}, \omega)=(15.969, 1.64, 2.7 \times 10^{-5}, 0.669)$. The specific value of $c$ is reported in the title of each panel.}
    \label{fig::framebyframebifurcation}
\end{figure}

To identify more precisely the signature of a chaotic behaviour, we compute the Lyapunov exponent as a function of $c$. For the deterministic map, the Lyapunov exponent is positive if and only if $T$ admits an absolutely continuous invariant measure. Figure \ref{fig::lyapunov}, from top to bottom, shows the estimated Lyapunov exponent for the deterministic map, as well as for the random system for different intensities of the noise. The Lyapunov exponent is not displayed for some values of the parameter $c$ because of some numerical issues we encountered to determine the intersection between the map and the horizontal axis. For this reason, it is not possible to fully appreciate that the exponent becomes a smooth function of $c$ when add even a small amount of noise, in agreement with Theorem \ref{thm::continuityLyapunov}. Figure \ref{fig::lyapunov} shows also the validity of Proposition \ref{cly} and therefore indirectly of Assumption \ref{itm:Ap}.

\begin{figure}
    \centering
    \includegraphics[scale=0.52]{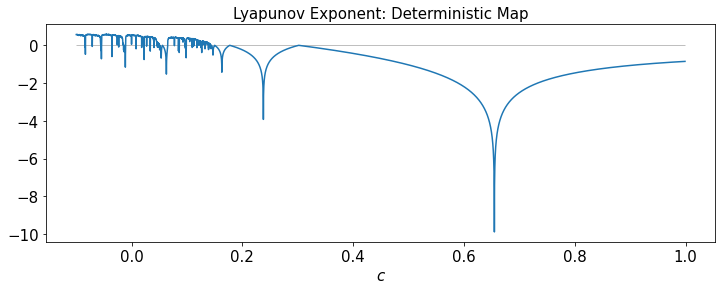}
    \includegraphics[scale=0.52]{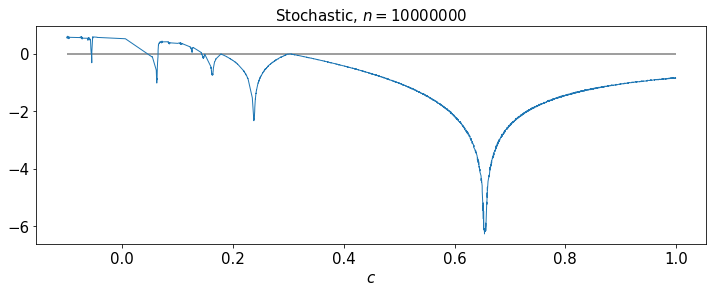}
    \includegraphics[scale=0.52]{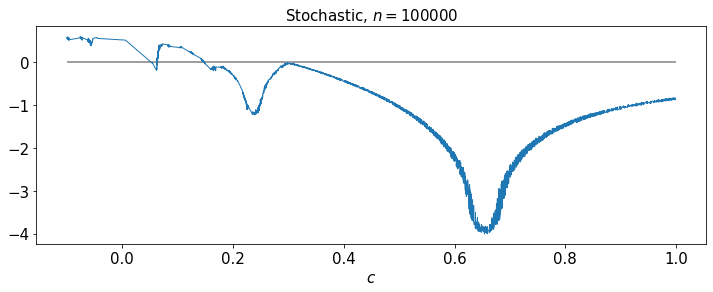}
    \includegraphics[scale=0.52]{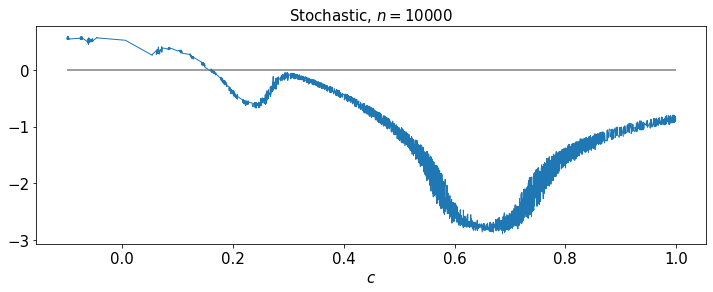}
    \caption{Lyapunov exponent for deterministic and stochastic maps. \textit{Parameters' configuration:} $(\gamma_0, \alpha, \Sigma_{\epsilon}, \omega)=(15.969, 1.64, 2.7 \times 10^{-5}, 0.669)$.}
    \label{fig::lyapunov}
\end{figure}

 Finally, Figure \ref{fig::random} displays the random map in Equation \eqref{eq::randomtransformations} together with the quantiles of the distribution of the graphs of the maps associated with the random maps. Notice that we use a different set for the parameters to emphasize the effect of the noise.
\begin{figure}
\hspace{-1.0cm}
    \centering
    \includegraphics[scale=0.50]{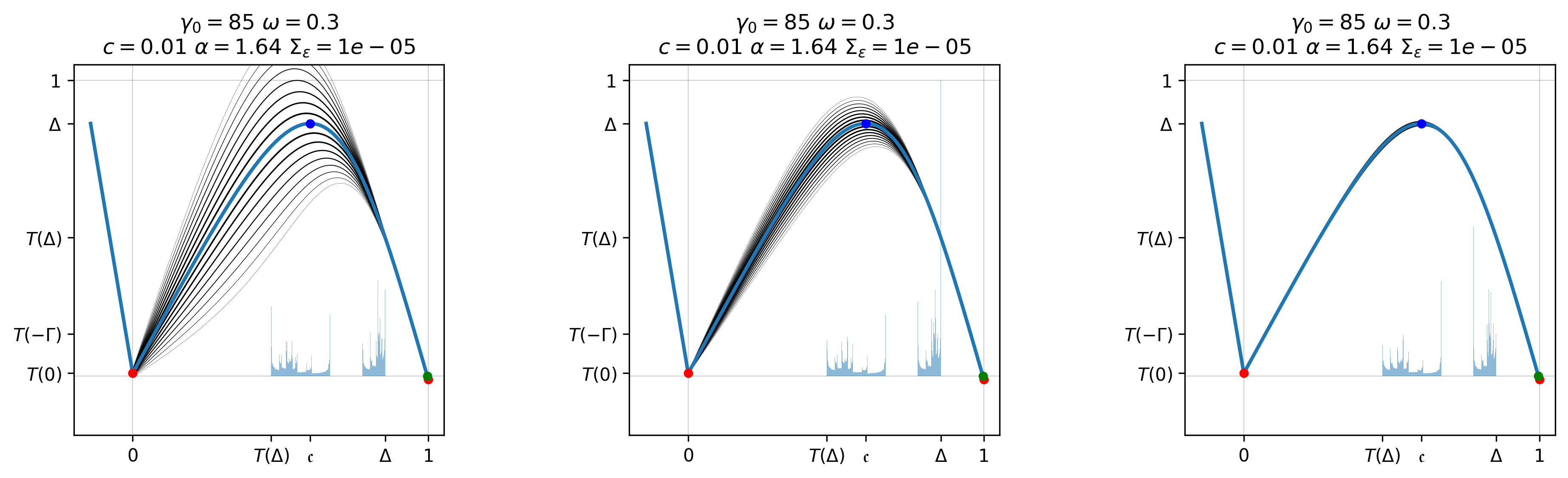}
    \caption{Random maps in Equation \eqref{eq::randomtransformations} together with the quantiles of the distribution of the graphs of the maps associated with the random maps; $\n=10, 100, 10000$.}
    \label{fig::random}
\end{figure}

\subsection{Limit theorems}\label{subsec::limittheorem}
We here investigate the validity of the Central Limit Theorem in Theorem \ref{thm::centrallimittheorem}-($e2$). We proceed in the following way. First, we choose as function $g \in BV$ such that $\int_{\mathbb{R}}g\,d\mu_{\n}=0$ the function
\begin{equation*}
    g(x) = \sin(x) - \int_{\mathbb{R}}\sin(t)\,dt.
\end{equation*}
Notice that in principle we would like to have a function $g$ with null average with respect the unknown measure $\mu_{\n}$; the function in the previous equation verifies this property with respect the Lebesgue measure. Nonetheless, we verify numerically the validity of the cited property also for $\mu_{\n}$. Then, we generate 20,000 orbits of length 10,000 by using the random transformation. In this way, for each $t \in \{1,\ldots, 10000\}$ we have a sample of the quantity $S_t$ in Equation \eqref{eq::sumslimittheorem}. Therefore, we can test if the distribution of $\frac{S_t}{\sqrt{t}}$ becomes more and more Gaussian as $t$ increases. In order to do so, we apply three normality tests, namely the Shapiro (\cite{shaphiro1965analysis}), the normal test of D'Agostino and Pearson's (\cite{diagostino1971omnibus, d1973tests}), and the Jarque-Bera's test (\cite{jarque1980efficient}). They all tests the null hypothesis that a sample comes from a normal distribution. Table \ref{tab::normality} reports the results. Within each row, the two subrows are the value of the test and, between brackets, the $p$-value From the table it is clear that the distribution of $\frac{S_t}{\sqrt{t}}$ becomes more and more Gaussian as $t$ increases, confirming the Central Limit Theorem stated above.        

\begin{table}[h]
\centering
\begin{tabular}{|c|c|c|c|c|}
\hline
\multirow{2}{*}{Normality Test}&\multicolumn{4}{c|}{$t$}\\
\cline{2-5}
 & $10$ & $1000$ & $5000$ & $10000$\\
\hline
\multirow{2}{*}{Shapiro}&0.952 &0.968 &\textbf{0.997} &\textbf{0.998}\\
                        &$(1.55\times10^{-17})$ &$(5.7\times10^{-14})$ &\textbf{(0.11)} &\textbf{(0.86)}\\
\hline
\multirow{2}{*}{Normal Test}&714.16 &67.55 &\textbf{4.19} &\textbf{0.282}\\
                        &$(8.34\times10^{-156})$ &$(2.15\times10^{-15})$ &\textbf{(0.12)} &\textbf{(0.86)} \\
\hline
\multirow{2}{*}{Jarque-Bera}&61.83 &79.73 &\textbf{4.26} &\textbf{0.33}\\
                            &$(3.7\times10^{-14})$ &$(4.85\times10^{-18}$ &\textbf{(0.11)} &\textbf{(0.84)} \\     
\hline
\end{tabular}
\caption{Normality tests for the variable $\left(\frac{S_t}{\sqrt{t}}\right)$ for different values of $t$. Each row reports the value af the tests and, between parentheses, the $p$-value.}
\label{tab::normality}
\end{table}

\subsection{Multifractal Analysis}\label{subsec::multifractal}
In this subsection, we compute the spectrum of the generalized dimension $D_q$ as in Subsection \ref{subsec::multifractal} by combining Equation \eqref{eq::generalizeddimension} with Equations \eqref{eq::limit} and \eqref{eq::limit2}. The results are displayed in Figure \ref{fig::generalizedOne}. The \textit{gray line} represents the value for the comparison as computed in Equation \eqref{eq::generalizedexplicit}. In order to compute the other lines we proceed in the following way. For $q>1$ we approximate the integral in Equation \eqref{eq::limit} by considering the so called \textit{partition sums} 
\begin{equation*}
    Z_{r}(q) = \sum_{\mu(B) \neq 0} (\mu(B))^{q},   
\end{equation*}
where the sum runs over all intervals $B$ of size $r$. In particular, we follow, e.g., \cite{riedi1995improved} and we restrict the variable $r$ to a sequence $r_n$ in order to give meaningful results. Our values of $r$ are defined by: $r = \text{np.linspace}(0.5\times 10^{-5}, 10^{-5},100)$. For a fixed $r$, the occupation number $n_{i}(r)$ of the $i$-th interval is defined as the number of sample points it contains out of $N$ sample points from the trajectory of our dynamical system. The measure $\mu_i$ of the interval $B_i$ is the fraction of time which a generic trajectory on the attractor spends in the $i$-th interval $B_i$  and is roughly equal to $n_i(r)/N$. Therefore, we compute $D(q)$ as the slope of a linear fit of 
\begin{equation*}
\log Z_r(q) = \log \left(\sum_{i} (n_i(r))^q\right)
\end{equation*}
against $\log r$; note that we have dropped the normalization factor $N = \sum_{i} n_i(r)$ since it is independent of $r$. The computation of $D(1)$ follows the same logic. Instead, for negative $q$ we follow \cite{riedi1996numerical} and we replace the occupation numbers $n_i(r)$ by the extended occupation numbers $n^{*}_i(r)$ which are defined by
\begin{equation*}
    n_i^{*}(r)=\sum_{j\,:\,B_j \subset B_{i}^{*}}n_i(r),
\end{equation*}
that is the number of sample points contained in the interval $B_i$ and its neighboring boxes. By looking, again, at Figure \ref{fig::generalizedOne}, it is interesting to see that for $\n=10$ the quantity $D_q$ for the random maps becomes almost a constant equals to 0.8--0.9, whereas this quantity changes more for higher values of $\n$ and, in particular, for the unperturbed map (proxied by $\n=10^{15}$). In particular, as we suspected in section \ref{subsec::multifractal}, the generalized dimension $D_q$ becomes more and more pronounced when $\n$ grows as the stationary measure converges (weakly) to the invariant measure of the deterministic map. In this respect, we think that the previous multifractal analysis could help us in discriminating  between chaotic and random behaviors. Indeed, the invariant measure of a deterministic map has usually fine properties which reveal themselves in a fractal or multifractal structure of the density. For our unimodal map $T$ this is due to the presence of countably many singularities for the density $h$. Instead, the equilibrium measure of Markov chains are usually  more {\em uniform} and indistinguishable  from absolutely continuous measures with bounded densities. Notice that the curves in Figure \ref{fig::generalizedOne} resemble the curves in Figure 3 in \cite{yan2021multiscale}, where authors investigate the multifractal features of liquidity in China's stock market. They claim that the liquidity time series at the studied time is no longer subject to a standard random walk process, but subject to a fractal biased random walk process. This shows that, theoretically, it is feasible to predict the liquidity of the Chinese securities market.

\begin{figure}
    \centering
    \includegraphics[scale=0.8]{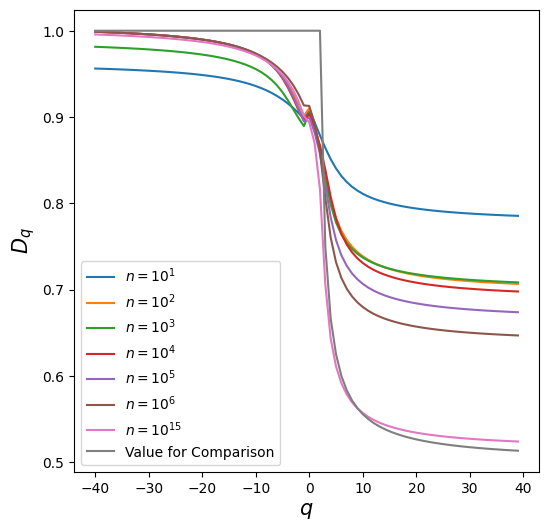}
    \caption{Spectrum of the generalized dimension $D_q$, for different values of the noise level $\n$. The gray line is derived from Eq. \eqref{eq::generalizedexplicit}. Notice that for our unimodal map the picture shows the order of divergence as the square root of the singular points as for the Benedicks-Carleson type.}
    \label{fig::generalizedOne}
\end{figure}

\subsection{Extreme Value Theory}\label{subsec::evt}
Finally we verify the validity of Proposition \ref{prop:extremevalue} on EVT. We proceed in the following way. We fix values for the parameters characterizing the map $(\gamma_0, \omega, c, \alpha, \Sigma_{\epsilon})$ as described above, for the initial point of each orbit $x_0$ and $z$ chosen randomly in the dynamical core ($x_0=0.38$ and $z=0.80$), for the parameter $\tau$ ($\tau=\log(10)$), and for the intensity of the noise ($\n=10^{3}$). Then, we determine numerically the sequence $u_t$ in such a way that $\mu_{\n}(B(z,e^{-u_t})) = \frac{\tau}{t}$, where $\mu_{\n}$ estimated from the histogram constructed with a very long orbit. The \textit{Left Panel} of Figure \ref{fig::evtempirical} shows the sequence $u_t$ as a function of $t$ and the  \textit{Right Panel}  displays the estimated $\mathbb{P}_{\n}(M_t^{(\n)} \leq u_t)$, where $M_t^{(\n)}$ is defined in Equation \eqref{eq::randomvariableEVT}, as a function of $t$, together with the theoretical value $e^{-\tau}$ (\textit{Red horizontal line}). The estimated probability  converges to the theoretical value, confirming our EVT results. From a financial point of view, this means that we are able to compute what is the probability that, given an initial leverage, the first time the leverage is ``close" to a given target is larger than $t$.\\
\begin{figure}
    \centering
    \includegraphics[scale=0.6]{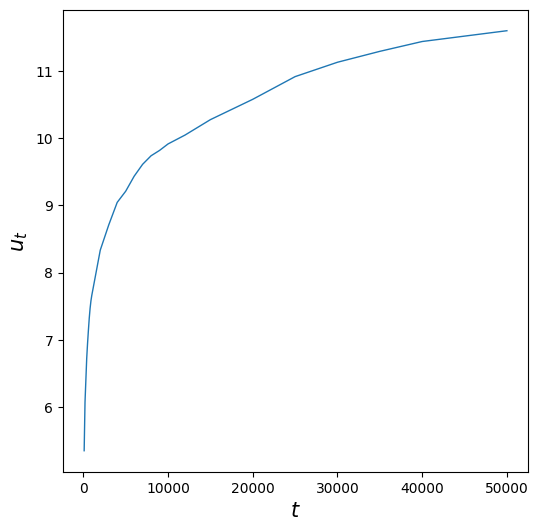}\\
    \includegraphics[scale=0.6]{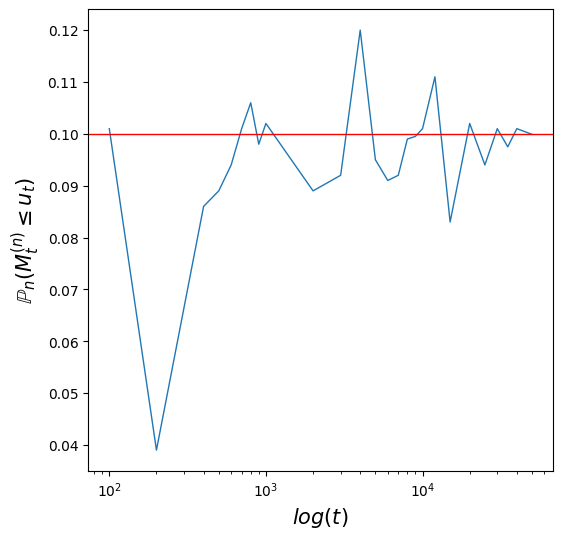}
    \caption{\textit{Top panel}: the sequence $u_t$ as a function of $t$; \textit{Bottom panel}: estimated $\mathbb{P}_{\n}(M_t^{(\n)} \leq u_t)$, where $M_t^{(\n)}$ is defined in Equation \eqref{eq::randomvariableEVT}, as a function of $\log t$, together with the theoretical value $e^{-\tau}$ (\textit{Red horizontal line})}
    \label{fig::evtempirical}
\end{figure}

\newpage
\paragraph{Acknowledgements}
SV thanks the Mathematical Research Institute MATRIX,
the Sydney Mathematical Research Institute (SMRI), the University of New South Wales, and the University of Queensland for their support and hospitality and where part of this research was performed.

\paragraph{Funding}
This research was supported by the research project `Dynamics and Information Research Institute - Quantum Information, Quantum Technologies' within the agreement between UniCredit Bank and Scuola Normale Superiore. F.L.\ and S.M.\ acknowledge partial support by the European Program scheme `INFRAIA-01-2018-2019: Research and Innovation action', grant agreement \#871042 'SoBigData++: European Integrated Infrastructure for Social Mining and Big Data Analytics'. The research of SV was supported by the project {\em Dynamics and Information Research Institute} within the agreement between UniCredit Bank and Scuola Normale Superiore di Pisa and by the Laboratoire International Associ\'e LIA LYSM, of the French CNRS and  INdAM (Italy). SV was also supported by the project MATHAmSud TOMCAT 22-Math-10, N. 49958WH, du french  CNRS and MEAE.

\paragraph{Data Availability} 
The authors will provide the code to reproduce the results of the article upon request. 

\paragraph{Declarations}
The authors have no relevant financial or non-financial interests to disclose.
\newpage
\printbibliography
\end{document}